\newtheorem{thm}{Theorem}[section]
\newtheorem{Lemma}[thm]{Lemma}
\newtheorem{defn}[thm]{Definition}
\journal{}
\begin{document}

\title{The Minimal Coloring Number Of Any Non-splittable $\mathbb{Z}$-colorable Link Is Four}
\author{Meiqiao Zhang, Xian'an Jin, Qingying Deng\\
\small School of Mathematical Sciences\\[-0.8ex]
\small Xiamen University\\[-0.8ex]
\small P. R. China\\
\small\tt Email:xajin@xmu.edu.cn
}
\begin{abstract}
K. Ichihara and E. Matsudo introduced the notions of $\mathbb{Z}$-colorable links and the minimal coloring number for $\mathbb{Z}$-colorable links, which is one of invariants for links. They proved that the lower bound of minimal coloring number of a non-splittable $\mathbb{Z}$-colorable link is 4. In this paper, we show the minimal coloring number of any non-splittable $\mathbb{Z}$-colorable link is exactly 4.
\end{abstract}
\begin{keyword}
$\mathbb{Z}$-colorable links; minimal coloring number; equivalent local moves.
\vskip0.2cm
\MSC 57M27\sep 57M25
\end{keyword}
\maketitle

\section{Introduction}

Imitating Fox-coloring \cite{f1} and the minimal coloring number \cite{h1} for links with Fox colorings, in order to deal with links of determinant 0, in \cite{1}, K. Ichihara and E. Matsudo introduced the notions of $\mathbb{Z}$-coloring and minimal coloring number, denoted by $mincol_{\mathbb{Z}}(L)$, for $\mathbb{Z}$-colorable links, which is one of invariants for links.

\begin{defn}
Let $L$ be a link and $D$ a diagram of $L$. We consider a map $\gamma$ :  \{arcs of $D$\}$ \xrightarrow{}  \mathbb{Z} $. If $\gamma$ satisfies the condition $2\gamma(a)=\gamma(b)+\gamma(c)$ at each crossing of $D$ with the over arc $a$ and the under arcs $b$ and $c$, then $\gamma$ is called a\emph{ $\mathbb{Z}$-coloring} on $D$. A $\mathbb{Z}$-coloring which assigns the same color to all arcs of the diagram is called the \emph{trivial $\mathbb{Z}$-coloring}. A link is called \emph{$\mathbb{Z}$-colorable} if it has a diagram admitting a non-trivial $\mathbb{Z}$-coloring.
\end{defn}

\begin{defn}
Let us consider the cardinality of the image of a non-trivial $\mathbb{Z}$-coloring on a diagram of a $\mathbb{Z}$-colorable link $L$. We call the minimum of such cardinalities among all non-trivial $\mathbb{Z}$-colorings on all diagrams of $L$ the \emph{minimal coloring number} of $L$, and denote it by \emph{$mincol_{\mathbb{Z}}(L)$}.
\end{defn}

\begin{defn}
Let $L$ be a $\mathbb{Z}$-colorable link, and $\gamma$ a non-trivial $\mathbb{Z}$-coloring on a diagram $D$ of $L$. Suppose that there exists a positive integer $d$ such that, at all the crossings in $D$, the differences between the colors of the over arcs and the under arcs are $d$ or $0$. Then we call $\gamma$ a \emph{simple $\mathbb{Z}$-coloring}.
\end{defn}

Then they proved:

\begin{thm}\label{bef}
\begin{enumerate}
\item Let $L$ be a non-splittable $\mathbb{Z}$-colorable link. Then $mincol_{\mathbb{Z}}(L) \geq 4$.
\item Let $L$ be a non-splittable $\mathbb{Z}$-colorable link. If there exists a simple $\mathbb{Z}$-coloring on a diagram of L, then $mincol_{\mathbb{Z}}(L)$ = 4.
\item If a non-splittable link $L$ admits a $\mathbb{Z}$-coloring with five colors, then $mincol_{\mathbb{Z}}(L)$ = 4.
\end{enumerate}
\end{thm}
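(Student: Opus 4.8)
The plan is to treat the three parts in turn, reducing parts (2) and (3) to the single task of exhibiting a non-trivial $\mathbb{Z}$-coloring with exactly four colors, since part (1) supplies the matching lower bound.

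For part (1) I would argue by eliminating one, two, and then three colors. Because $L$ is non-splittable, every diagram $D$ of $L$ is connected (a disconnected diagram would split $L$), so I may fix one diagram. The first step is to record the local meaning of the defining relation: at a crossing with over arc $a$ and under arcs $b,c$, the equation $2\gamma(a)=\gamma(b)+\gamma(c)$ says precisely that $\gamma(b),\gamma(a),\gamma(c)$ form an arithmetic progression. Call a crossing \emph{trivial} if its three arcs share a color. I would first note that if every crossing is trivial then connectivity of $D$ forces $\gamma$ to be constant, so a non-trivial coloring must possess a non-trivial crossing. One color is trivial by definition. For two colors $\{m,M\}$, a progression with middle term in $\{m,M\}$ forces both under arcs to equal the over arc, making every crossing trivial --- a contradiction.

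The substantive case is three colors $m<t<M$. Inspecting the progression condition shows that a non-trivial crossing must carry the middle color $t$ on the over arc and the extreme colors $m,M$ on the under arcs, and must satisfy $2t=m+M$. I would then prove that an arc colored $t$, when it appears as an under arc, can only sit at a trivial crossing, since an over arc colored $m$ or $M$ would force a color outside $\{m,M\}$; hence the color $t$ propagates along its whole component, and the $t$-colored arcs fill out entire link components. Consequently, at every crossing between a $t$-colored component and a component carrying $m$ or $M$, the $t$-strand must pass over. The union of all $t$-components then lies entirely above the rest of the diagram and can be isotoped away, splitting $L$ into two non-empty pieces (all three colors are used), which contradicts non-splittability. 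Thus three colors are impossible and $mincol_{\mathbb{Z}}(L)\ge 4$.

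For parts (2) and (3) it remains to build a non-trivial four-coloring. Since the number of colors is unchanged by an affine substitution $\gamma\mapsto\alpha\gamma+\beta$ (which preserves the defining relation), in part (2) I would first normalize a simple coloring so that every non-trivial crossing changes color by exactly $\pm 1$; the colors then occupy a consecutive range of integers whose adjacent values are joined by crossings. In part (3) I would instead start from the five given colors and the arithmetic relations they force. In both cases the aim is to compress the range down to four values by the ``equivalent local moves'' named in the keywords: locate a tangle in which an extremal color occurs and replace it by a link-equivalent tangle carrying a coloring that avoids that color, repeating until exactly four colors survive and the coloring is still non-trivial. Combined with part (1), this gives $mincol_{\mathbb{Z}}(L)=4$. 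I expect the main obstacle to be exactly this local-move reduction: one must design replacement tangles that reproduce the same link type, match any prescribed boundary colors so that the global coloring condition is preserved, and strictly decrease a chosen color-complexity without ever collapsing to a trivial coloring. The arithmetic hypotheses --- a single step size in (2) and only five available colors in (3) --- are what make finitely many such moves suffice, whereas the three-color elimination and its splitting argument in (1) are comparatively self-contained.
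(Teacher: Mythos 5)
First, note that the paper you were given does not prove Theorem \ref{bef} at all: it is quoted from Ichihara and Matsudo \cite{1}, and the present paper only invokes its part (2) to deduce the main result (Theorem \ref{main}). So your attempt can only be measured against the proof in \cite{1}. Your argument for part (1) is correct and is essentially the known one: with two colors the relation $2\gamma(a)=\gamma(b)+\gamma(c)$ forces every crossing to be monochromatic, hence (the diagram of a non-splittable link being connected) the coloring is trivial; with three colors $m<t<M$, every non-trivial crossing must be of the form $m|t|M$ with $2t=m+M$, an under arc colored $t$ can only occur at a $t|t|t$ crossing, so the color $t$ fills out whole components, those components pass over every other component at each mutual crossing, and lifting them off splits $L$, contradicting non-splittability. (You should still add the one-sentence justification that a sublink crossing over everything else can be isotoped above a separating plane, but this is standard.)

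The genuine gap is in parts (2) and (3), and you point to it yourself: the ``equivalent local moves'' that are supposed to compress the palette to four colors are never constructed. What you write --- locate an extremal color, replace the tangle containing it by a link-equivalent tangle whose coloring avoids that color, check that boundary colors match and that some complexity strictly decreases without trivializing the coloring --- is a specification of what a proof must accomplish, not a proof; the entire content of parts (2) and (3) in \cite{1} is the explicit construction of such moves, with figures, a case analysis over crossing configurations, and an induction that makes ``finitely many moves suffice'' precise. (For comparison, the analogous reduction in the present paper, from an arbitrary $\mathbb{Z}$-coloring to a simple one, occupies Lemmas \ref{11} and \ref{22} together with numerous figures of explicit tangle replacements.) Your normalization step in (2) is fine --- at every crossing of a simple coloring with difference $d$ the three colors are congruent modulo $d$, so by connectivity an affine change brings the coloring to difference $1$ --- but after that nothing in your text explains how to remove, say, the maximal color without creating crossings that violate the coloring condition or reintroduce that color elsewhere, and part (3) is not addressed beyond one sentence. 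Since part (1) alone gives only the lower bound $mincol_{\mathbb{Z}}(L)\geq 4$, the proposal as written does not establish the theorem.
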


In the end of \cite{1}, they posed two questions:
\vspace{2ex}
\newline
\noindent{\bf Question 1.5.}
\begin{enumerate}
\item Does $mincol_{\mathbb{Z}}(L) = 4$ always hold for any non-splittable $\mathbb{Z}$-colorable link $L$?
\item Does every non-splittable $\mathbb{Z}$-colorable link admit a simple $\mathbb{Z}$-coloring?
\end{enumerate}

In this paper, we give a positive answer to Question 1.5 (2), and hence a positive answer to Question 1.5 (1) by Theorem \ref{bef} (2).

\section{Main result and its proof}

For convenience, we denote the crossing with over arc $b$ and under arcs $a, c$ by $a|b|c$. Let $d$ be a nonnegative integer, we call a crossing a \emph{d-diff} one if the difference between the colors of the over arc and the under arcs is $d$. In the figures through the rest of this paper, we don't distinguish the over arc and under arcs of the uni-colored crossings. Moreover, a uni-colored $b|b|b$ crossing often represents a finite number, including 0, of $b|b|b$ crossings.

\begin{defn}
Let $L$ be a $\mathbb{Z}$-colorable link, and $\gamma$ a non-trivial $\mathbb{Z}$-coloring on a diagram $D$ of $L$. Two non-0-diff crossings are \emph{adjacent} if there are only a finite number of 0-diff crossings between them, as shown in the Figure 1.1 (containing 4 cases and 10 subcases altogether).
\end{defn}

In this section we shall prove:

\begin{thm}\label{main}
Any non-splittable $\mathbb{Z}$-colorable link admits a simple $\mathbb{Z}$-coloring.
\end{thm}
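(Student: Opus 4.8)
The plan is to start from an arbitrary non-trivial $\mathbb{Z}$-coloring $\gamma$ on some diagram $D$ of $L$ and to modify the pair $(D,\gamma)$ by a sequence of local moves---each preserving the link type and the non-triviality of the coloring---until every crossing is either uni-colored (a $0$-diff crossing) or a $d$-diff crossing for one single fixed positive integer $d$. First I would record, for the current coloring, the set $S=\{d_1>d_2>\cdots\}$ of distinct positive diffs occurring at the crossings of $D$. Since a simple $\mathbb{Z}$-coloring is precisely one for which $|S|\le 1$, I would run an induction on a complexity measure built from $|S|$ and the largest diff $\max S$, aiming to show that whenever two adjacent non-$0$-diff crossings carry different diffs there is a local move that strictly lowers this complexity.

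The heart of the argument is the local analysis around a pair of adjacent non-$0$-diff crossings, that is, two non-$0$-diff crossings joined by a strand whose color is constant through only $0$-diff (uni-colored) crossings. Up to the symmetries of a diagram, these configurations are exhausted by the finitely many pictures recorded in Figure 1.1 (the $4$ cases and $10$ subcases). In each configuration I would exhibit an explicit local move---assembled from Reidemeister moves together with a prescribed recoloring of the newly created arcs---that either equalizes the two diffs or trades them for strictly smaller ones in the spirit of a Euclidean reduction (roughly, a crossing of diff $a$ adjacent to a crossing of diff $b$ with $a>b$ is replaced by crossings whose diffs are $b$ and $a-b$). Because travelling under a non-$0$-diff crossing reflects the running color across the over-arc color, and because the crossing condition forces the over-arc color to be the midpoint $2\gamma(a)=\gamma(b)+\gamma(c)$ of the two under-arc colors, these moves interact predictably with $\gamma$, and one verifies subcase by subcase that the coloring condition is restored at every crossing and that non-triviality survives.

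Iterating this reduction drives all the positive diffs down to their greatest common divisor $d$, after which every crossing is $0$-diff or $d$-diff and the coloring is simple. Non-splittability enters to guarantee that the non-$0$-diff crossings are interlinked along the diagram, so that a single common value $d$ can be propagated across all the components and the coloring we end with is genuinely non-trivial rather than collapsing to the trivial one; this is exactly the hypothesis under which Theorem \ref{bef}(2) then yields $mincol_{\mathbb{Z}}(L)=4$.

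I expect the principal obstacle to be the case analysis itself: checking that in each adjacency subcase of Figure 1.1 an admissible local move exists, that it strictly decreases the chosen complexity, and---most delicately---that it never re-introduces a larger diff elsewhere, disconnects the pattern of non-$0$-diff crossings, or trivializes the coloring, so that the induction actually terminates. Controlling these side effects, with the reflection description of how colors change under crossings as the main bookkeeping tool, is where the real work lies.
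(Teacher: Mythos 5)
Your proposal follows essentially the same route as the paper's proof: a Euclidean-algorithm reduction of the crossing diffs to their greatest common divisor, implemented by explicit local moves on pairs of adjacent non-$0$-diff crossings organized by the case analysis of Figure 1.1, with non-splittability invoked to ensure distinct diffs keep meeting so that iteration terminates in a simple coloring. The paper packages this as two lemmas (first eliminating a $qn$-diff crossing adjacent to an $n$-diff one, then the division step $m=qz+r$), i.e.\ the division form of the Euclidean reduction rather than your subtractive form $(a,b)\mapsto(b,a-b)$, but this is the same argument, and in both versions the substantive work is exactly the subcase-by-subcase verification of the local moves that your outline identifies but defers.
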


To prove Theorem \ref{main}, we need two lemmas.

\begin{figure}[htbp]
\centering
\includegraphics[height=5.5cm]{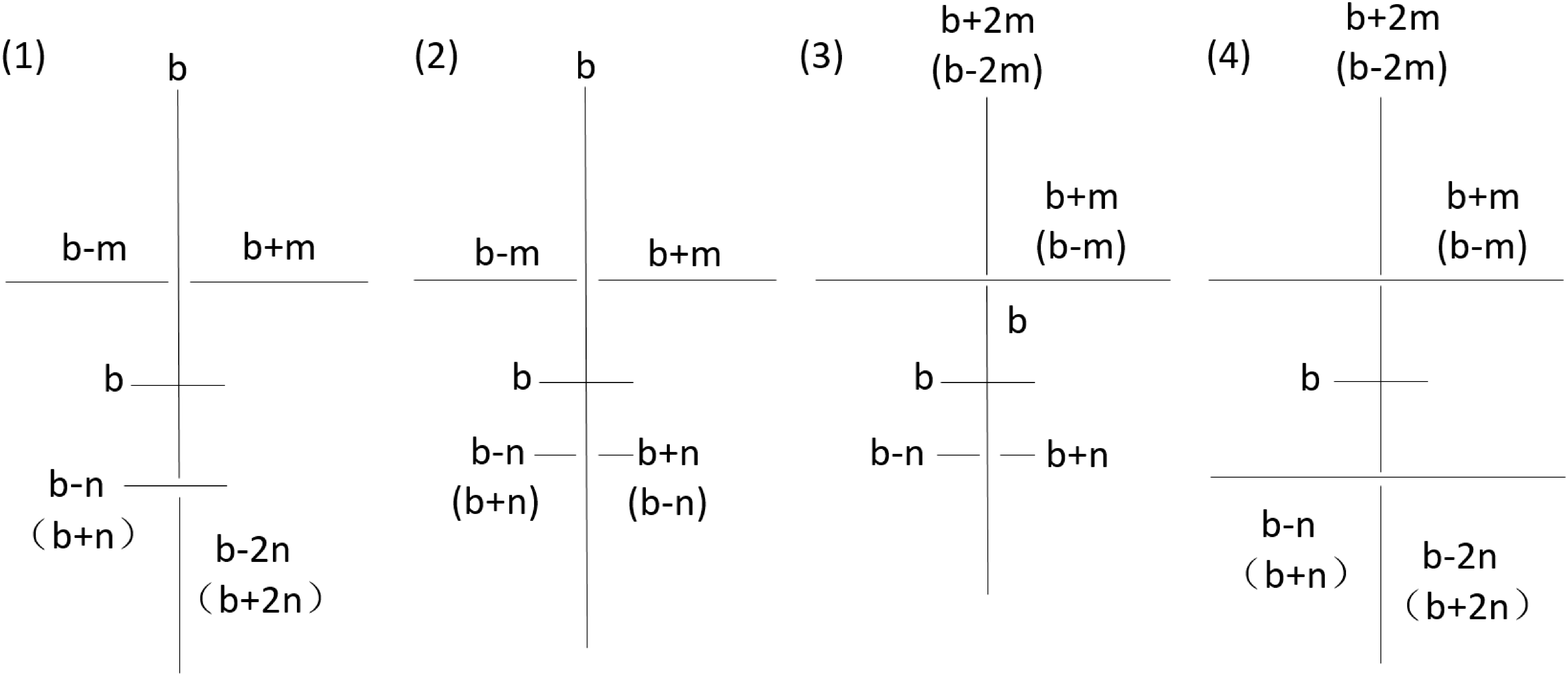}
\centerline{Figure 1.1}
\end{figure}

\begin{Lemma}\label{11}
Let $L$ be a $ \mathbb{Z} $-colorable link, and $\gamma$ a $ \mathbb{Z} $-coloring on a diagram $D$ of $L$. If there exists a pair of adjacent $n$-diff crossing and $qn$-diff crossing ($q \geq 2$, $q\in \mathbb{N}^{+}$), then the $qn$-diff crossing can be eliminated by equivalent local moves. Moreover, any newly created crossing in the process of elimination is either a $0$-diff crossing or an $n$-diff crossing.
\end{Lemma}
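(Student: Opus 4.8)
The local moves in question are the Reidemeister moves, carried out so that the $\mathbb{Z}$-coloring extends (uniquely) across the modified region; since these preserve the link type, ``eliminate by equivalent local moves'' means to produce a diagram of the same link in which the $qn$-diff crossing no longer occurs. My plan is to trade the single $qn$-diff crossing, at which the under-strand makes one large color jump, for a short chain of $n$-diff (and $0$-diff) crossings, using the adjacent $n$-diff crossing as a seed from which to manufacture a supply of arcs whose colors all lie in the arithmetic progression $\{c+kn : k\in\mathbb{Z}\}$, where $c$ denotes the over-arc color at the $qn$-diff crossing.

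The computational engine is the behavior of colors under a Reidemeister II move: if a strand colored $p$ is fingered across a strand colored $q$, the two crossings created are both $|p-q|$-diff, and a new arc colored $2p-q$ appears. Consequently, starting from any two arcs whose colors differ by $n$ — which the $n$-diff crossing provides — repeated R2 moves generate arcs of color $c+kn$ for every integer $k$ (each new rung differs from an existing one by $n$), so every crossing so created is an $n$-diff crossing. Reidemeister I moves contribute only $0$-diff crossings, and Reidemeister III moves preserve the difference-type of each of the three crossings they permute. Thus the entire toolbox introduces nothing worse than $0$- and $n$-diff crossings, which is precisely the side condition asserted in the statement.

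With this ladder available, I would fix notation at the $qn$-diff crossing as over-arc $c$ with the under-strand entering colored $c-qn$ and leaving colored $c+qn$, a jump of $2qn$. The reduction replaces this one crossing by $q$ crossings in each of which the under-strand jumps by only $2n$: one routes the under-strand to pass, in order, under the rung-arcs colored $c-(q-1)n,\,c-(q-3)n,\dots,c+(q-1)n$, so that its color climbs $c-qn \to c-qn+2n \to \cdots \to c+qn$ with every intermediate crossing being $n$-diff. Here the adjacency hypothesis is exactly what makes the scheme work: the shared arc carries only $0$-diff crossings, which are uni-colored and hence preserve color, so the arc linking the two crossings has a color in $\{c-qn,c,c+qn\}\subset c+\mathbb{Z}n$; the remaining arcs of the $n$-diff crossing then differ from it by $0$ or $n$ and also lie in $c+\mathbb{Z}n$, anchoring the whole ladder to the correct progression and letting the rungs be slid into position (past the intervening $0$-diff crossings via R3, or simply absorbed).

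The main obstacle is not any single computation but the exhaustive case analysis forced by Figure 1.1: the two crossings can share an arc in the four essentially different ways and ten subcases shown there (over versus under on each side, together with the sign choices $\pm n$ and $\pm qn$), and in each subcase one must exhibit the explicit Reidemeister sequence and verify that the manufactured ladder closes up using only $0$- and $n$-diff crossings — a point that is genuinely subcase-dependent, since, for instance, the original over-arc $c$ is itself one of the required rungs only when $q$ is odd. I would therefore treat one representative subcase in full to fix the ladder-and-detour template, and then argue that mirror symmetry, reversal of the sign choices, and the over/under duality reduce the remaining subcases to it, so that only the bookkeeping, rather than any new geometric idea, remains.
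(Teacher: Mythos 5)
Your computational engine is the correct one, and it is the same one that drives the paper's proof: an R2 finger move pushing an arc colored $p$ across an arc colored $r$ creates two $|p-r|$-diff crossings and a new arc colored $2r-p$, and your parity observation (the original over-arc $c$ serves as the middle rung exactly when $q$ is odd, while for $q$ even the re-routed under-strand meets it in a $0$-diff crossing) is exactly right. However, where the paper proceeds by induction on $q$ (Case 1 reduces $q=k$ to $q=k-2$, and Cases 2--4 reduce to earlier cases with $q=k-2$, $k-1$ or $k+1$), you propose to build the whole ladder at once, and this is where your plan has a genuine gap: the transport of the rungs. The ladder can only be seeded from the arcs of the $n$-diff crossing, which sit at the far end of the connecting strand, separated from the $qn$-diff crossing by the intervening $0$-diff crossings. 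At each such $0$-diff crossing the transversal strand has the same color $p$ as the connecting strand (where $p\in\{c,\,c\pm qn\}$), so moving any arc colored $x$ past it forces R2 moves that create $|x-p|$-diff crossings; R3 alone cannot carry a strand past a crossing it does not already cross, so your parenthetical ``slid into position past the intervening $0$-diff crossings via R3'' fails. In particular, sliding an already-built rung colored $c+kn$ with $|k|\geq 2$ past these crossings creates crossings whose difference is a multiple of $n$ strictly larger than $n$ --- violating precisely the ``only $0$- or $n$-diff'' clause you are required to prove. The repair is to reverse the order of operations: first transport only those arcs of the $n$-diff crossing whose colors differ from $p$ by at most $n$ (this creates only $0$- and $n$-diff crossings, and two arcs differing by $n$ always survive this restriction), and only then grow the ladder in place beside the $qn$-diff crossing; moreover the rungs must end up on both sides of the over-strand $c$, which only arcs colored $c$ or $c\pm n$ may cross safely. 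This bookkeeping is exactly what the paper's case-by-case figures are carrying out, and it is absent from your write-up.

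Two smaller but real issues. First, ``one routes the under-strand to pass, in order, under the rung-arcs'' describes the desired end state, not a move sequence: pushing a rung over the under-strand by R2 creates two crossings whose color changes cancel, so by itself it never advances the under-strand's color; one must interleave such pushes with R3 moves involving the original crossing to unfold them into a monotone ladder, which is the actual content of the paper's Figures 1.1.1--1.4.3. Second, your hope that mirror symmetry, sign reversal, and ``over/under duality'' collapse the ten subcases to one representative is too optimistic: there is no over/under duality for $\mathbb{Z}$-colorings (the coloring relation $2\gamma(a)=\gamma(b)+\gamma(c)$ distinguishes the over-arc), and the paper accordingly handles the four configurations of Figure 1.1 by genuinely different reductions to one another rather than by symmetry, although sign symmetry does halve the work within each case.
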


\begin{proof} We prove it case by case. Take $m=qn$ in Figure 1.1.

\noindent{\bf Case 1}: See Figure 1.1 (1). In this case we prove Lemma \ref{11} by induction on $q$. When $q=2$, we can eliminate $b-2n|b|b+2n$ as shown in Figure 1.1.1, and any newly created crossing in the process of elimination is either a $0-$diff crossing or an $n-$diff crossing.
\begin{figure}[htbp]
\centering
\includegraphics[height=6cm]{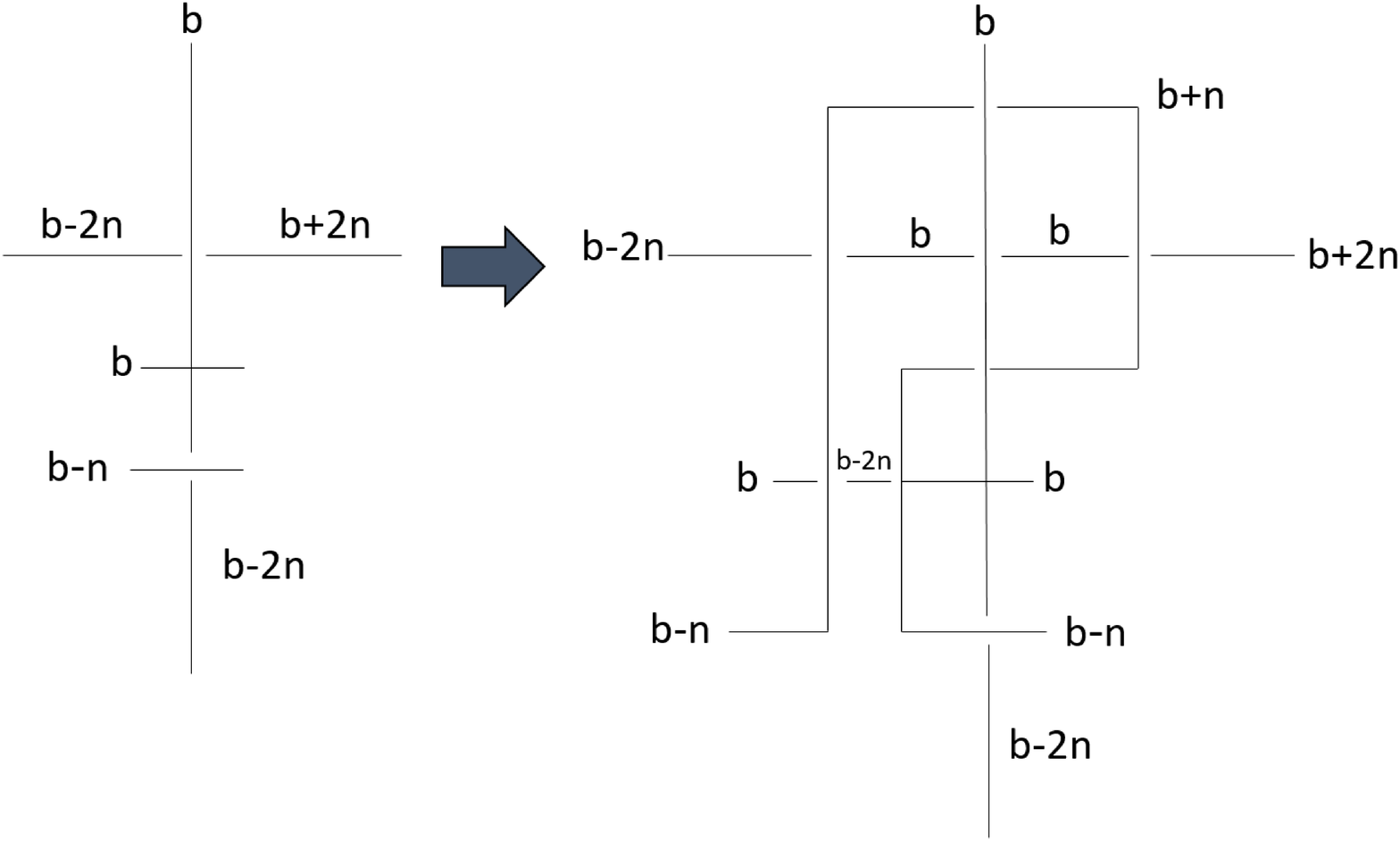}
\includegraphics[height=6cm]{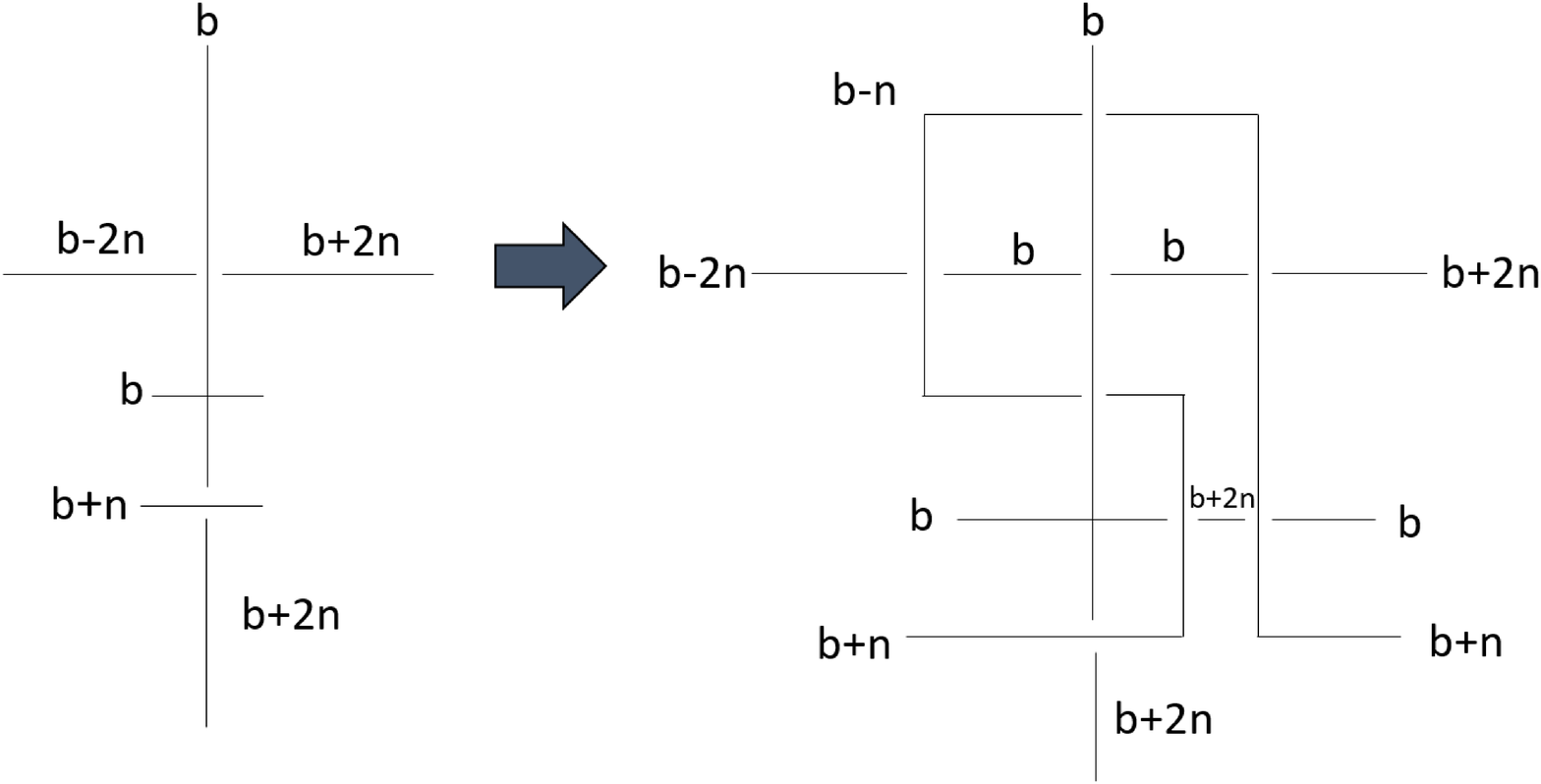}
\centerline{Figure 1.1.1}
\end{figure}
When $q=3$, we can eliminate $b-3n|b|b+3n$ as shown in Figure 1.1.2, and any newly created crossing in the process of elimination is either a $0-$diff crossing or an $n-$diff crossing.
\begin{figure}[htbp]
\centering
\includegraphics[height=6cm]{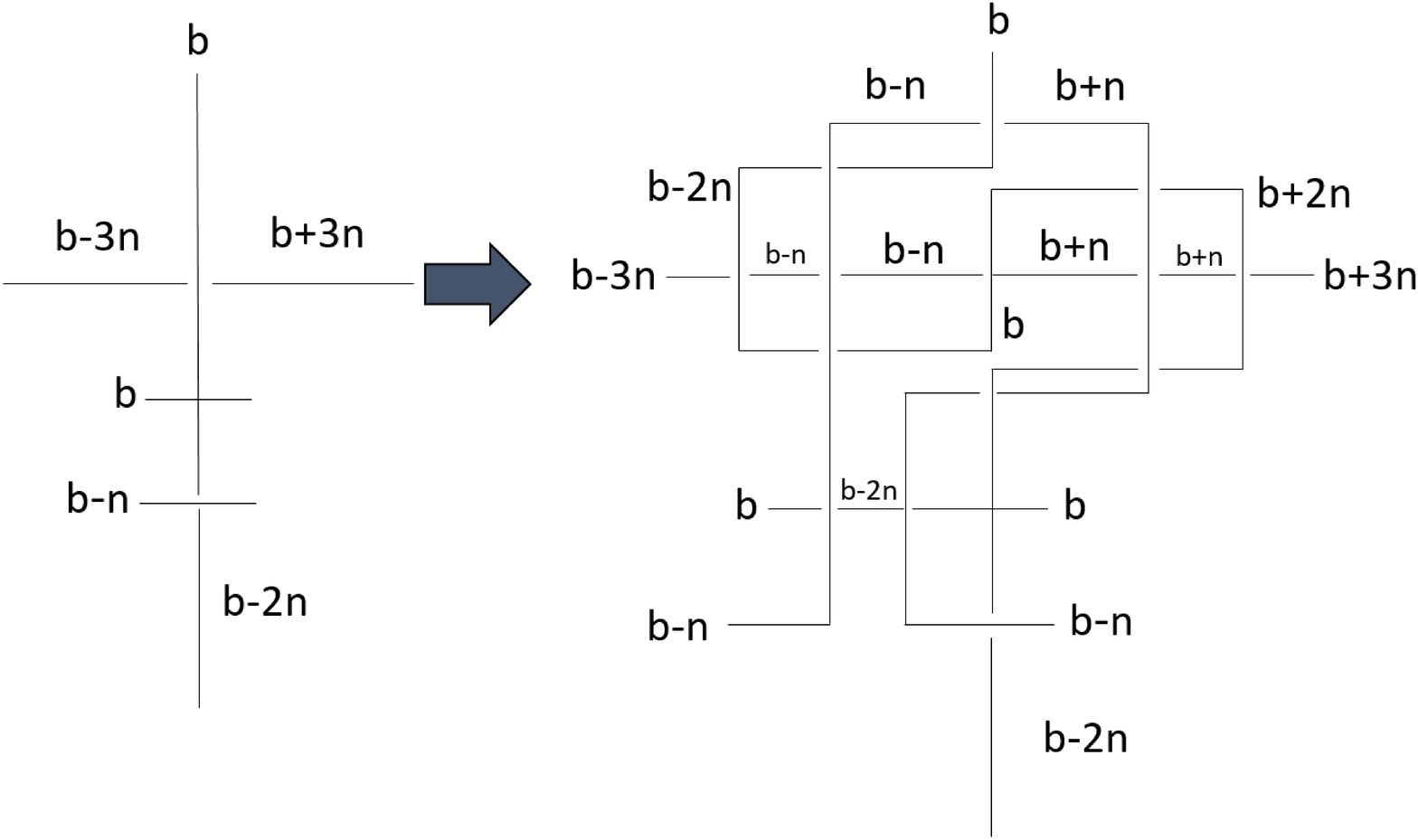}
\includegraphics[height=6cm]{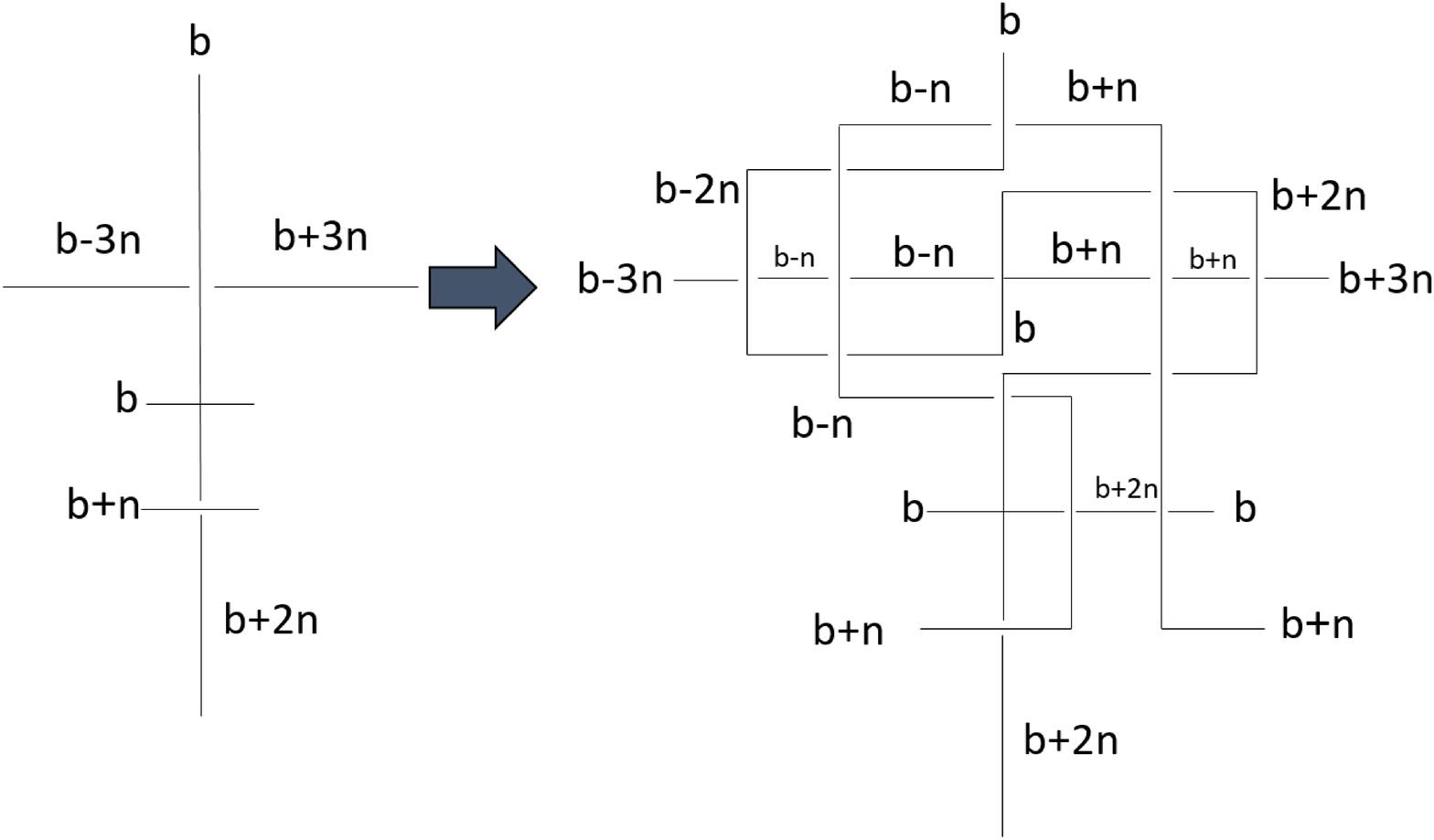}
\centerline{Figure 1.1.2}
\end{figure}
Now we assume that when $q\leq k-1$, $k\geq4$, the lemma holds, and shall prove when $q=k$, the lemma also holds. When $q=k$, we can reduce it to $q=k-2$ as shown in Figure 1.1.3.

\begin{figure}[htbp]
\centering
\includegraphics[height=6.2cm]{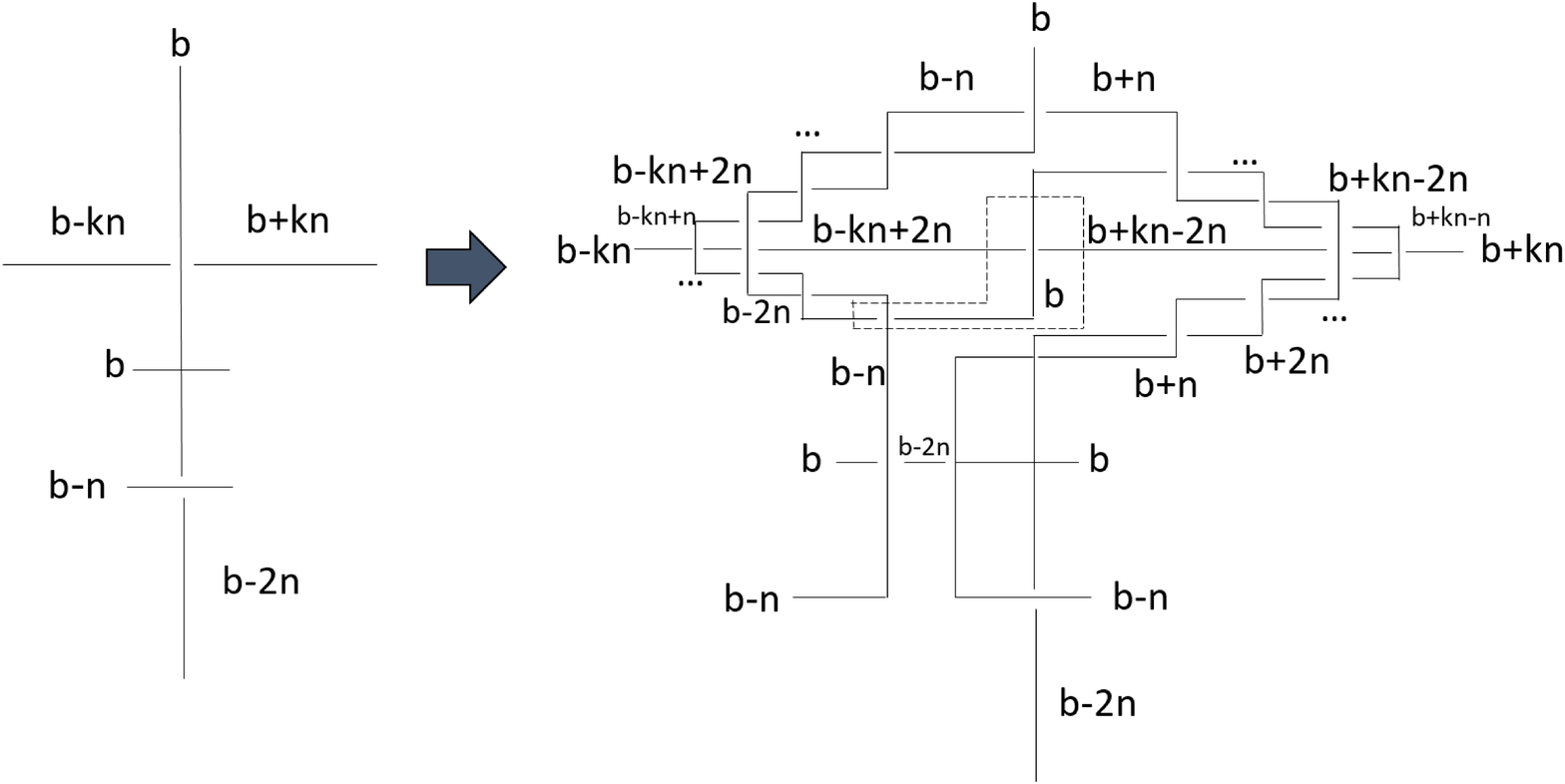}
\includegraphics[height=5.8cm]{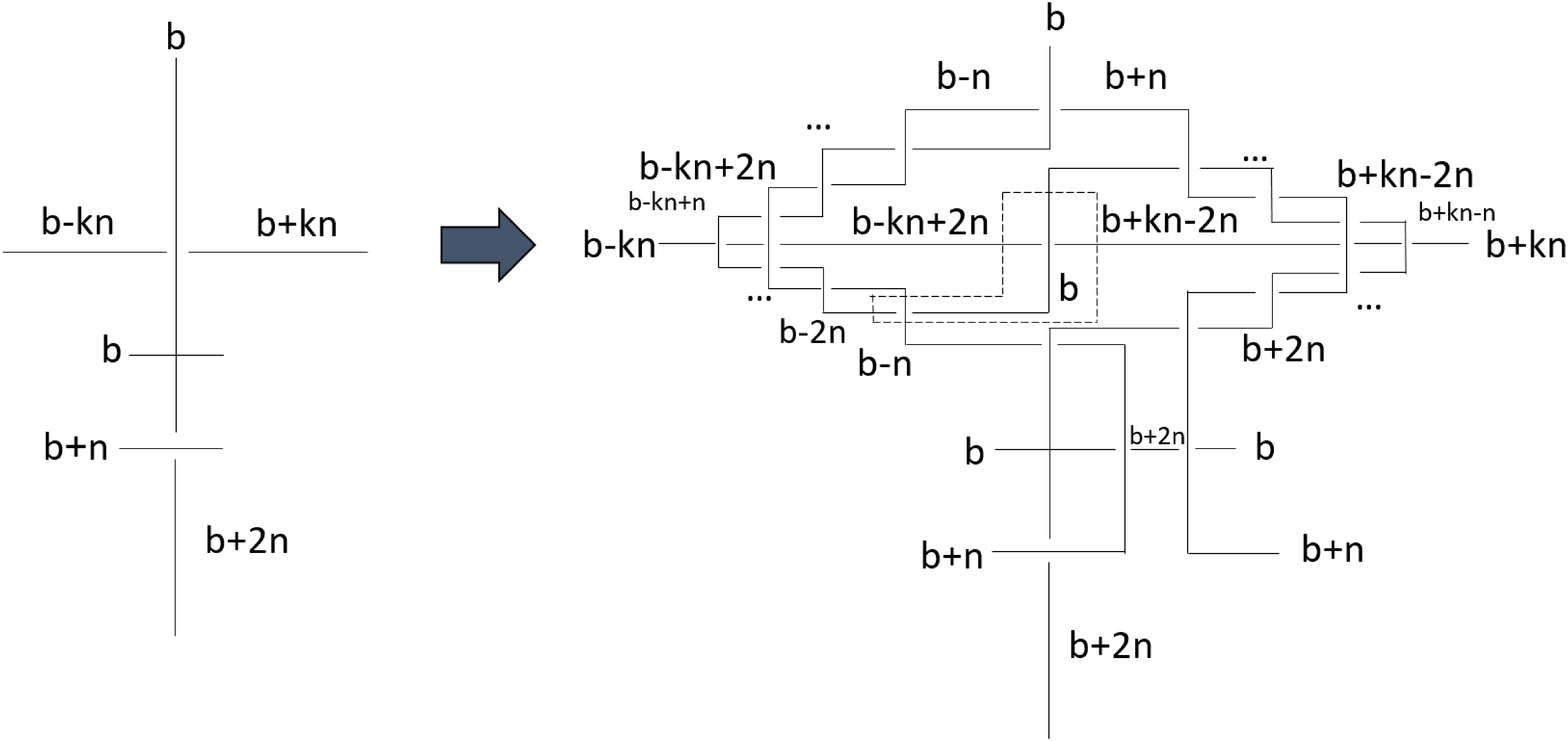}
\quote{Figure 1.1.3: The structure with newly created $b-(k-2)n|b|b+(k-2)n$ contained in the dashed box is the Case 1 with $q=k-2$. We can eliminate it by induction.}
\end{figure}

\noindent{\bf Case 2}: See Figure 1.1 (2). When $q=2$, we can eliminate $b-2n|b|b+2n$ as shown in Figure 1.2.1, and any newly created crossing in the process of elimination is either a $0-$diff crossing or an $n-$diff crossing.
\begin{figure}[htbp]
\centering
\includegraphics[height=5cm]{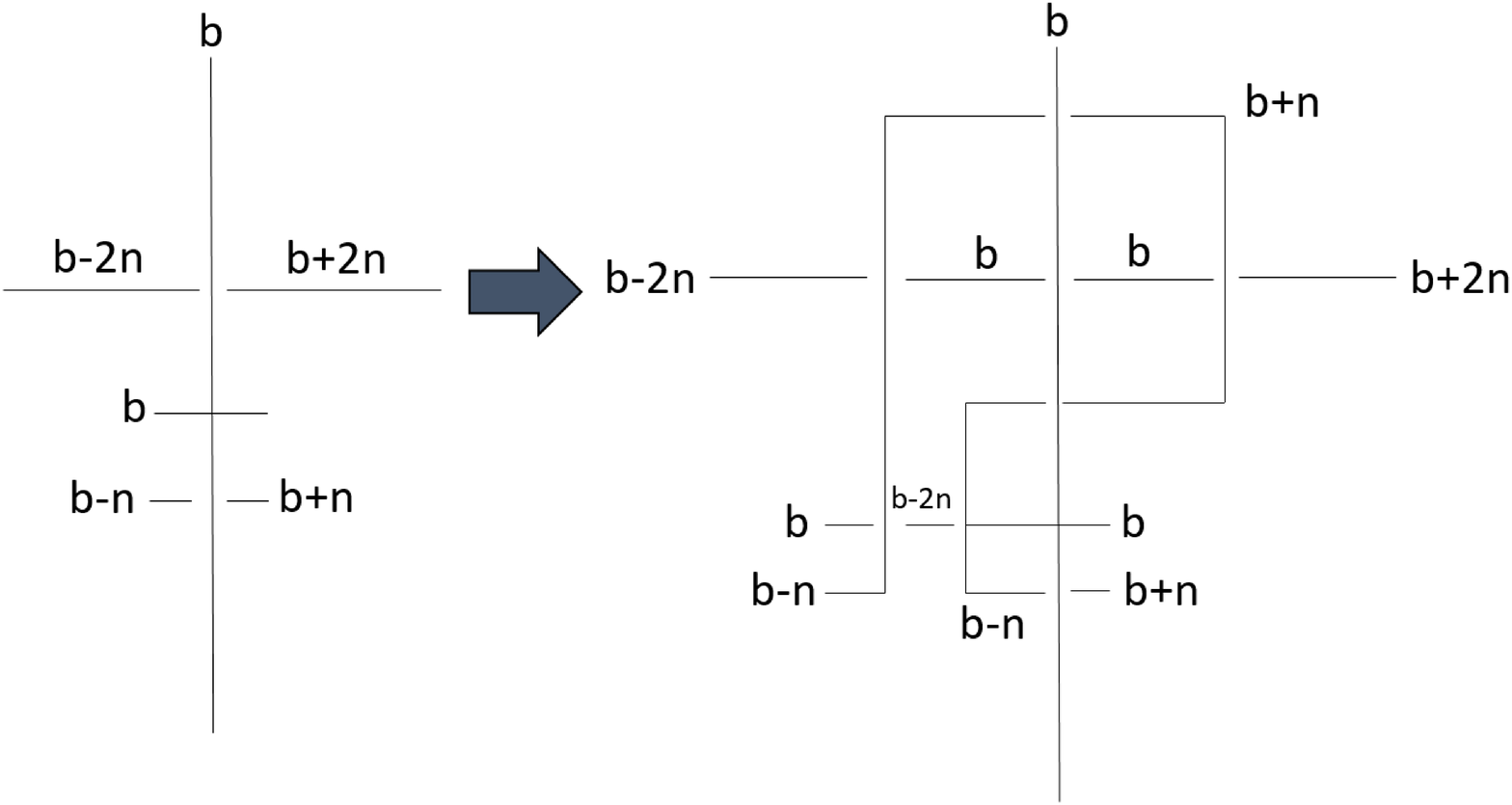}
\includegraphics[height=5cm]{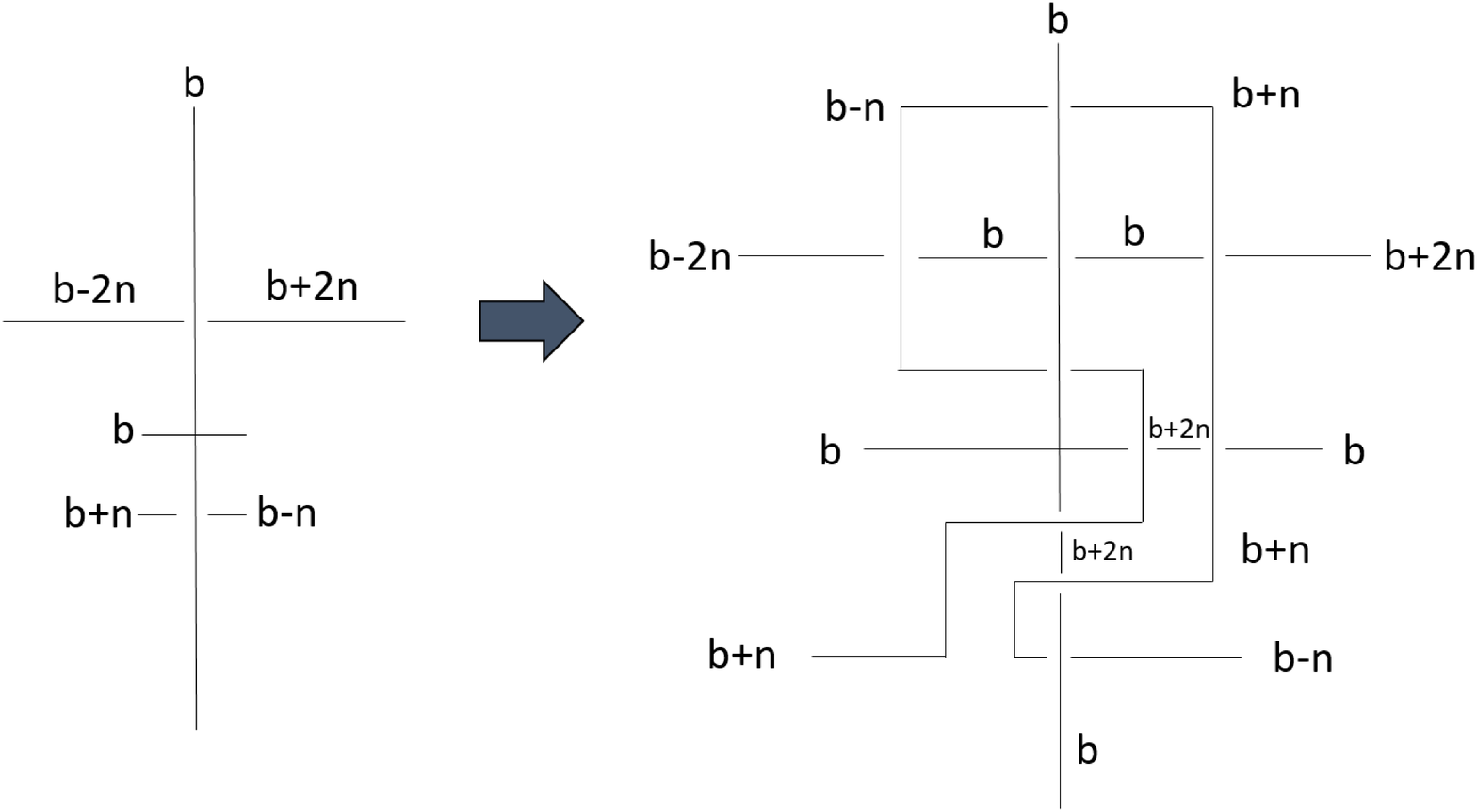}
\centerline{Figure 1.2.1}
\end{figure}
When $q=k$, $k\geq3$, we can operate as shown in Figure 1.2.2. Although there is newly created $b-(k-2)n|b|b+(k-2)n$ crossing, we can use the structure contained in the dashed box to eliminate it since it is the Case 1 with $q=k-2$.
\begin{figure}[htbp]
\centering
\includegraphics[height=5cm]{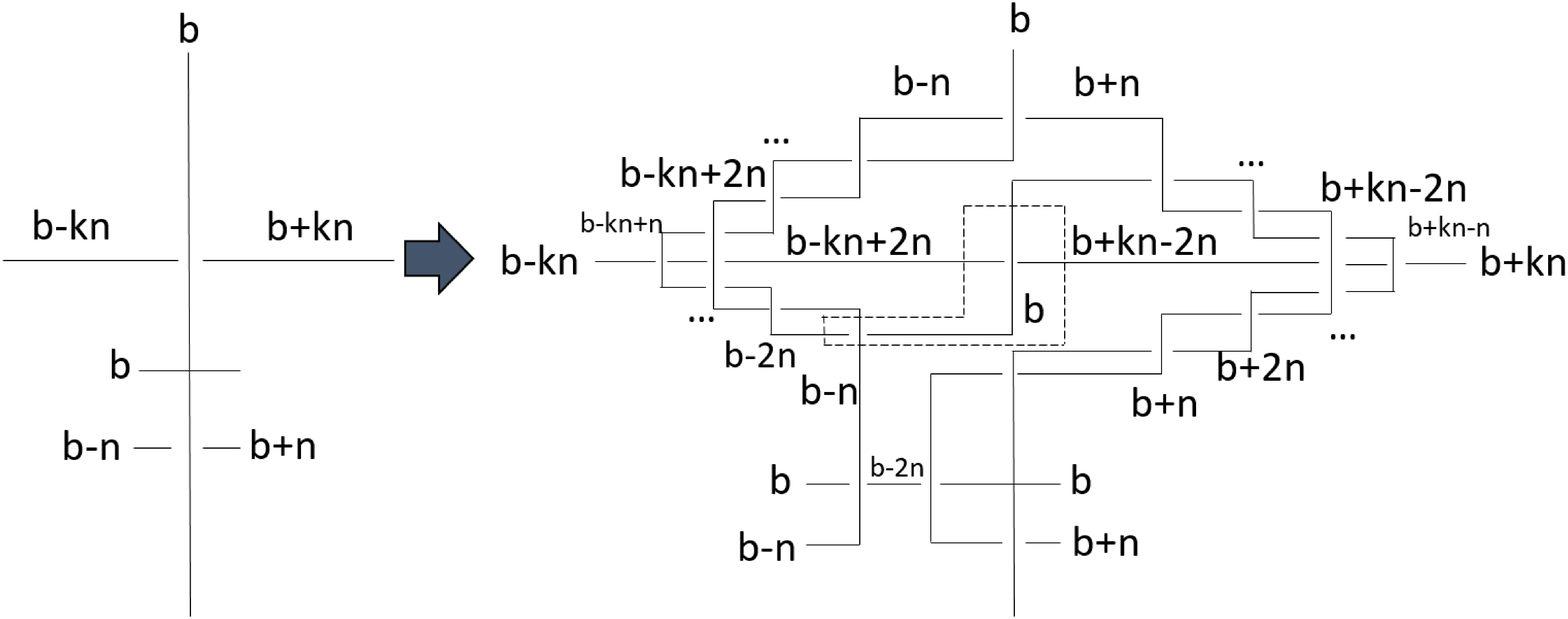}
\includegraphics[height=6.2cm]{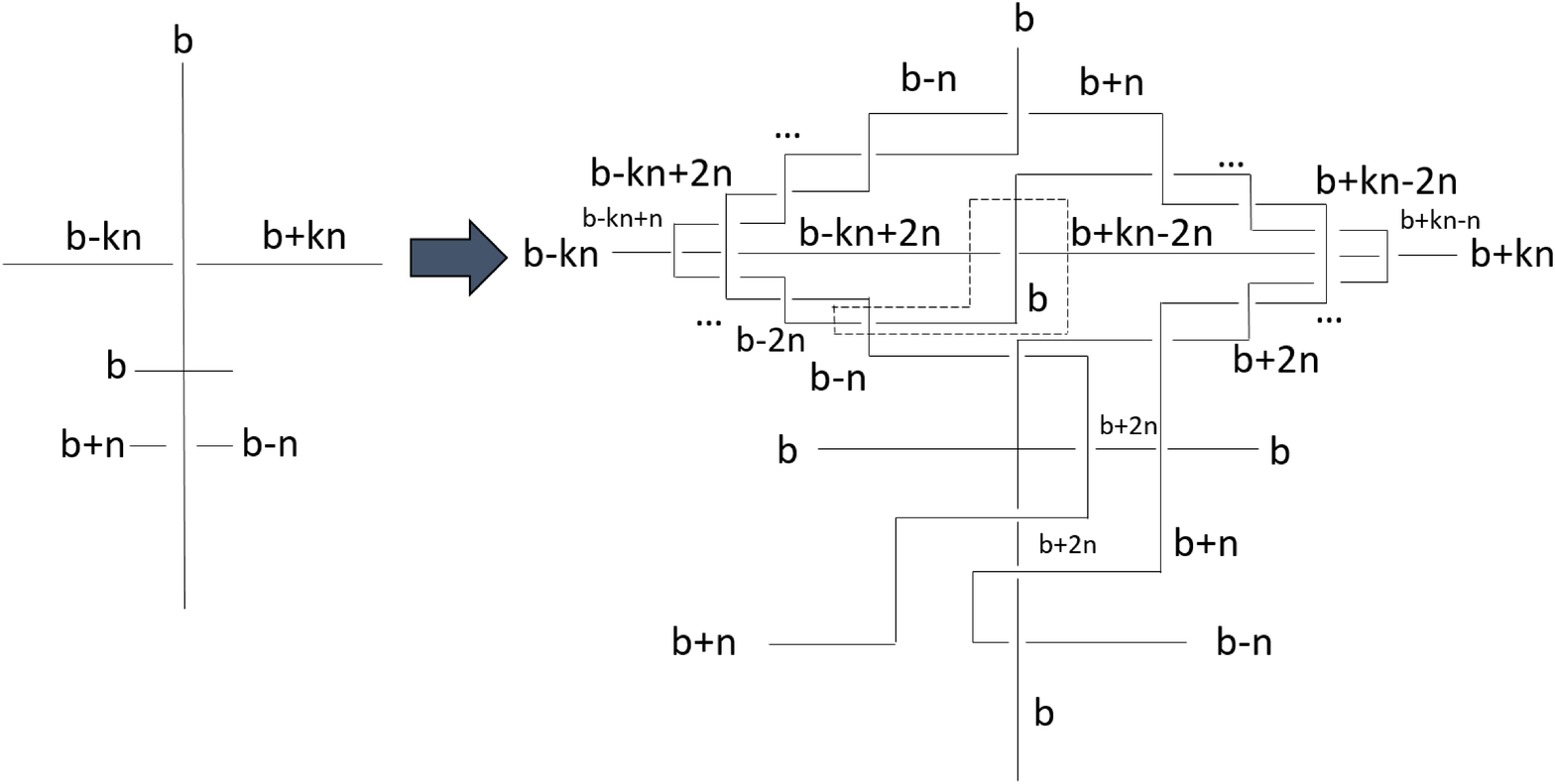}
\centerline{Figure 1.2.2}
\end{figure}

\noindent{\bf Case 3}: See Figure 1.1 (3). When $q=2$, we can eliminate $b|b+2n|b+4n$ and $b|b-2n|b-4n$ as shown in Figure 1.3.1, and any newly created crossing in the process of elimination is either a $0$-diff crossing or an $n$-diff crossing.
\begin{figure}[htbp]
\centering
\includegraphics[height=5.2cm]{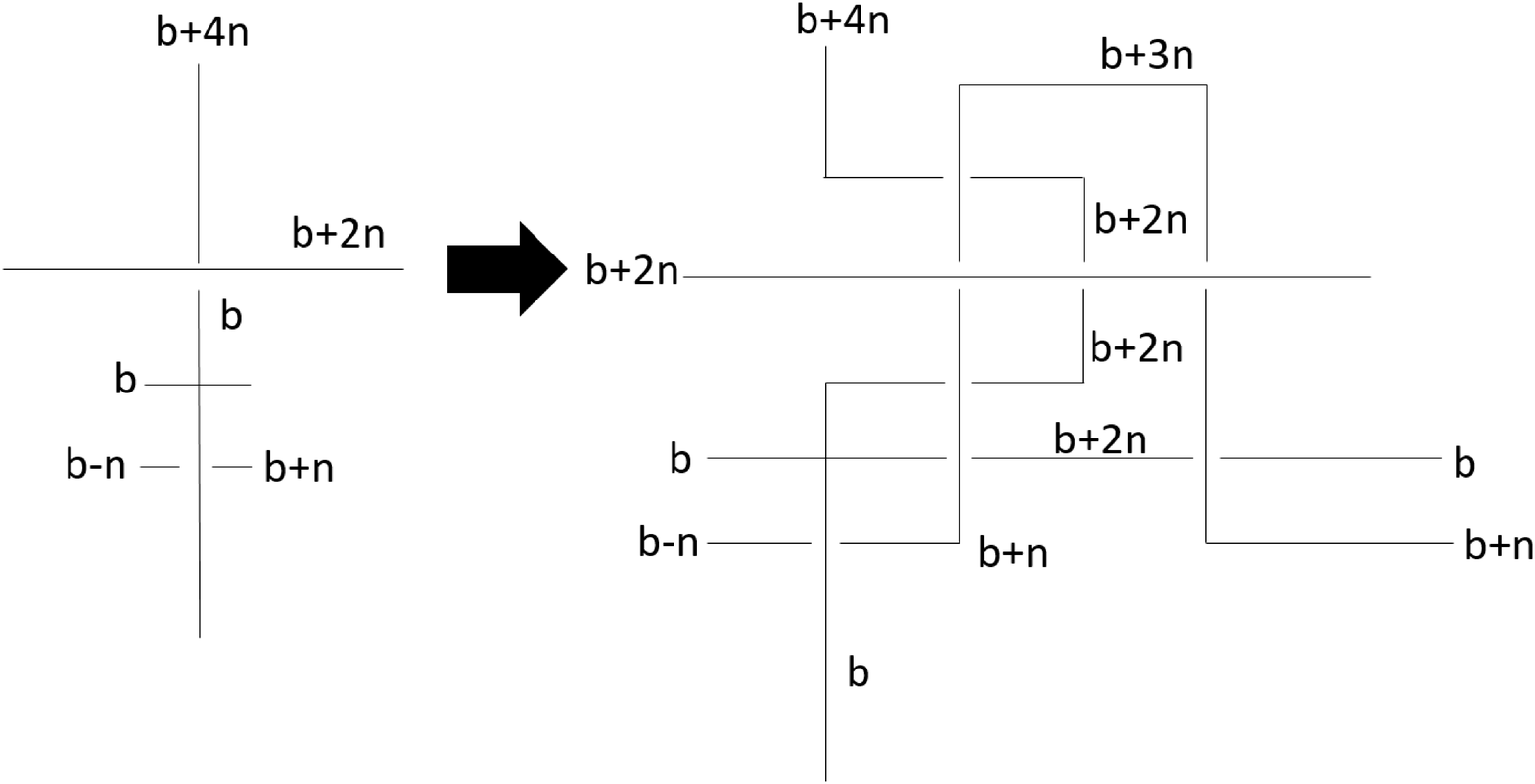}
\includegraphics[height=5.2cm]{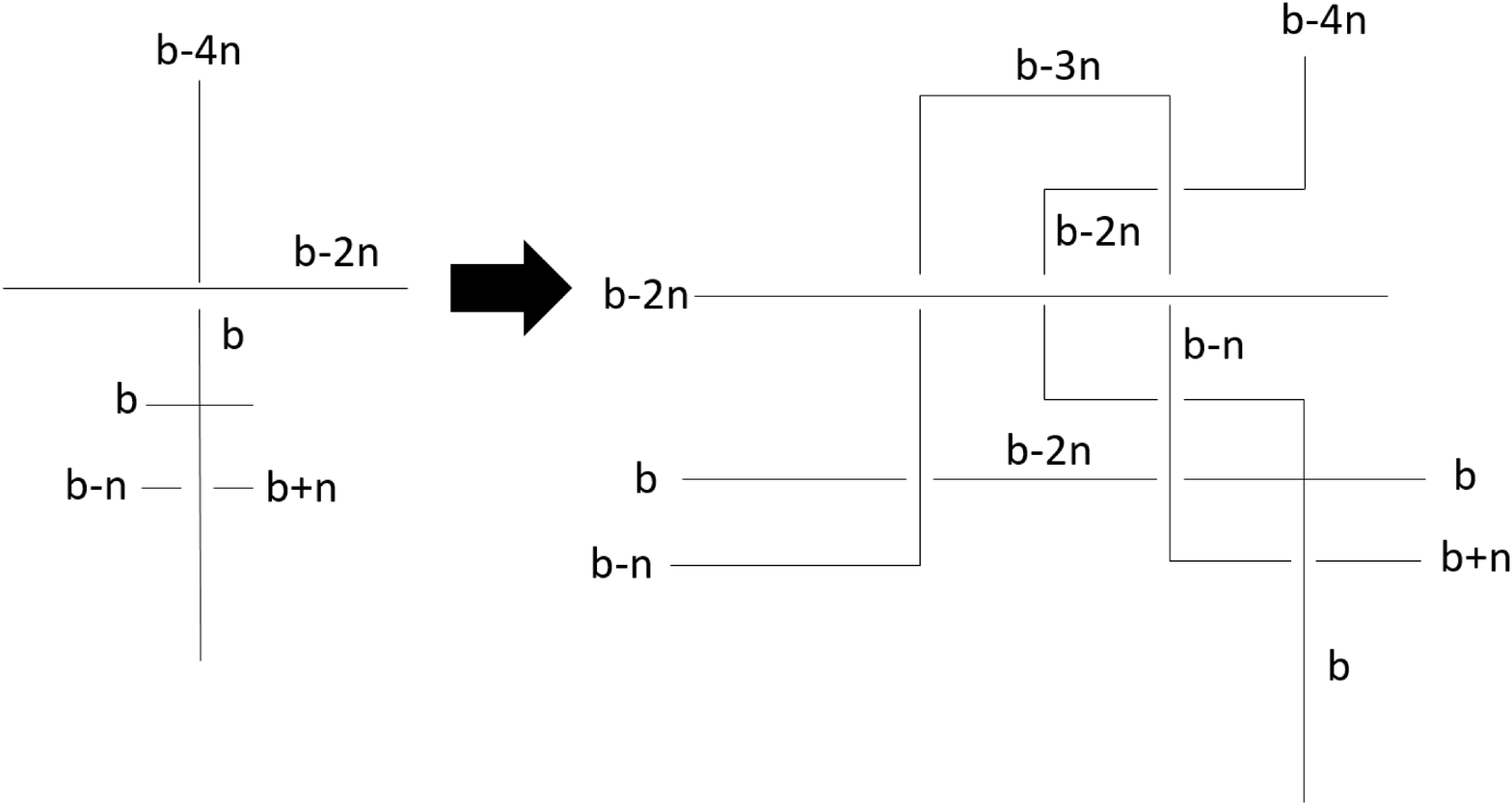}
\centerline{Figure 1.3.1}
\end{figure}
When $q=k$, $k\geq3$, we can operate as shown in Figure 1.3.2. Let $b'=b+kn$, $b''=b-kn$. Although there is newly created $b+n|b+kn|b+(2k-1)n$ crossing, that is $b'-(k-1)n|b'|b'+(k-1)n$ in the figure (above) and newly created $b-n|b-kn|b-(2k-1)n$, that is $b''+(k-1)n|b''|b''-(k-1)n$ in the figure (below), both the structures contained in the dashed boxes are the Case 2 with $q=k-1$. We can eliminate them as done in Case 2.
\begin{figure}[htbp]
\centering
\includegraphics[height=8cm]{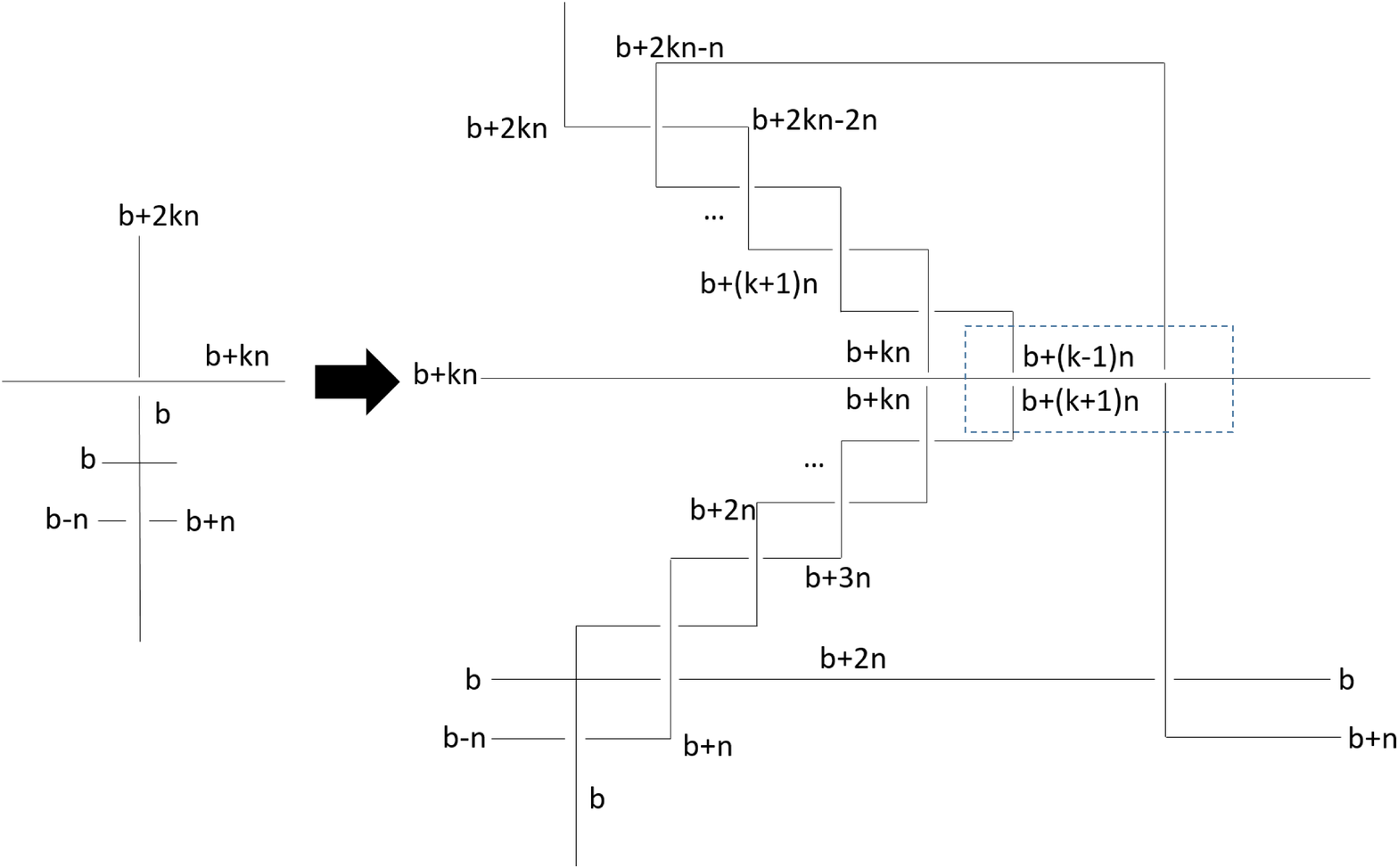}
\includegraphics[height=8cm]{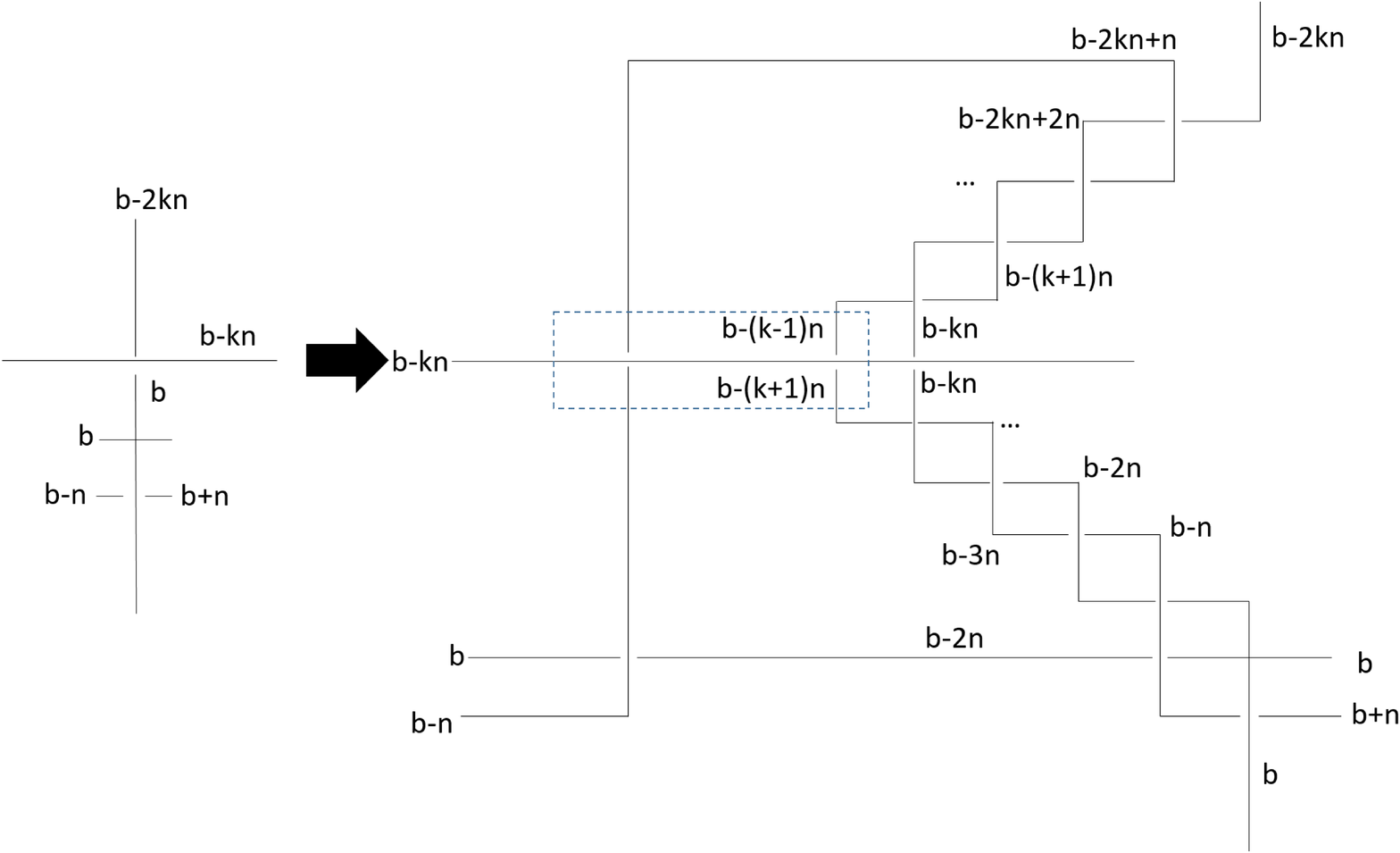}
\centerline{Figure 1.3.2}
\end{figure}

\noindent{\bf Case 4}: See Figure 1.1 (4). There are 4 subcases.

For the first and second subcases, we can eliminate $b|b+kn|b+2kn$ and $b|b-kn|b-2kn$ as shown in Figure 1.4.1. Let $b'=b+kn$, $b''=b-kn$. The newly created $b-n|b+kn|b+(2k+1)n$, that is $b'-(k+1)n|b'|b'+(k+1)n$ in the figure (above) and the newly created $b+n|b-kn|b-(2k+1)n$, that is $b''+(k+1)n|b''|b''-(k+1)n$ in the figure (below), contained in the dashed boxes are the Case 2 with $q=k+1$. We can eliminate them as done in Case 2, and any newly created crossing in the process of elimination is either a $0$-diff crossing or an $n$-diff crossing.
\begin{figure}[htbp]
\centering
\includegraphics[height=8cm]{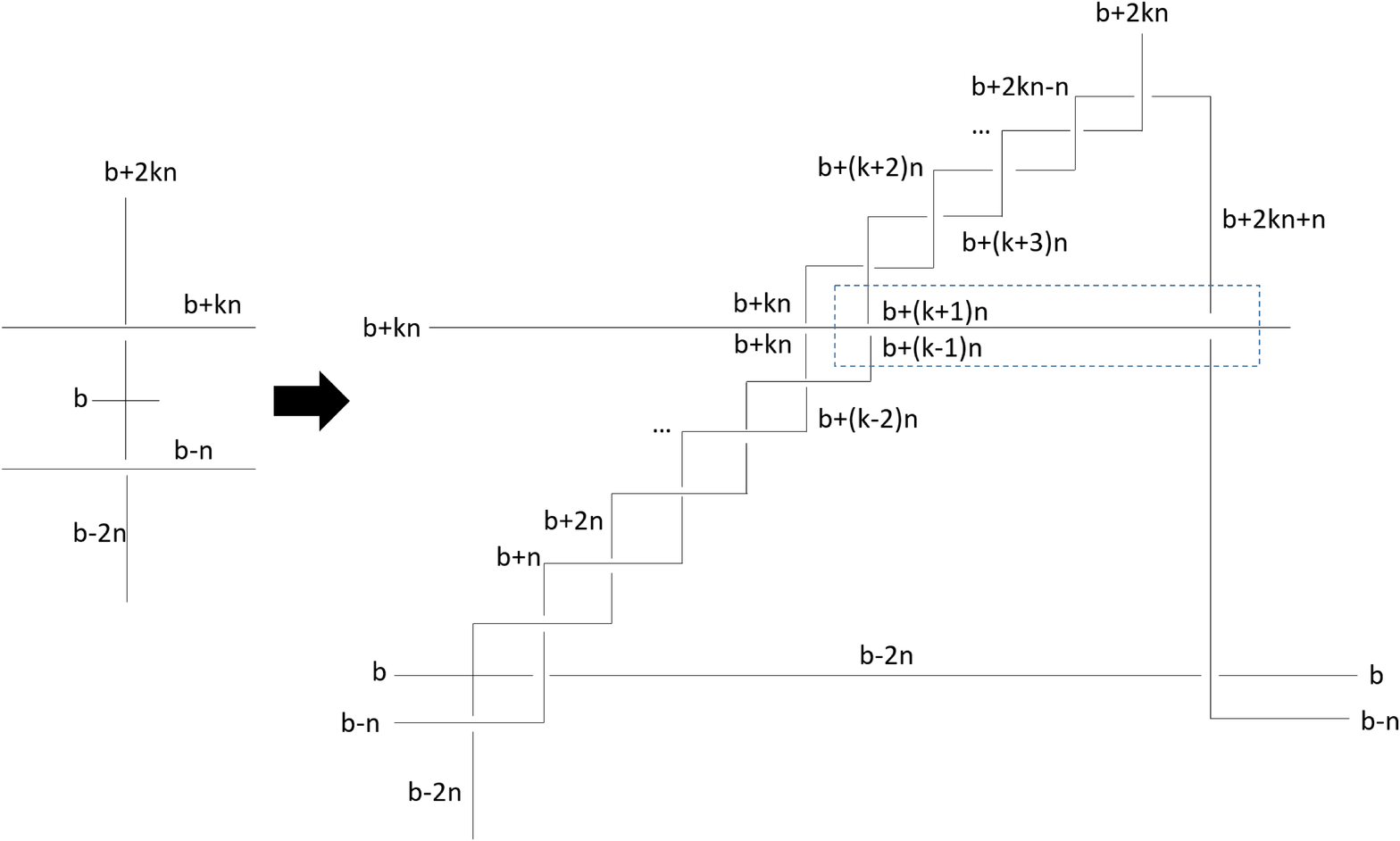}
\includegraphics[height=8cm]{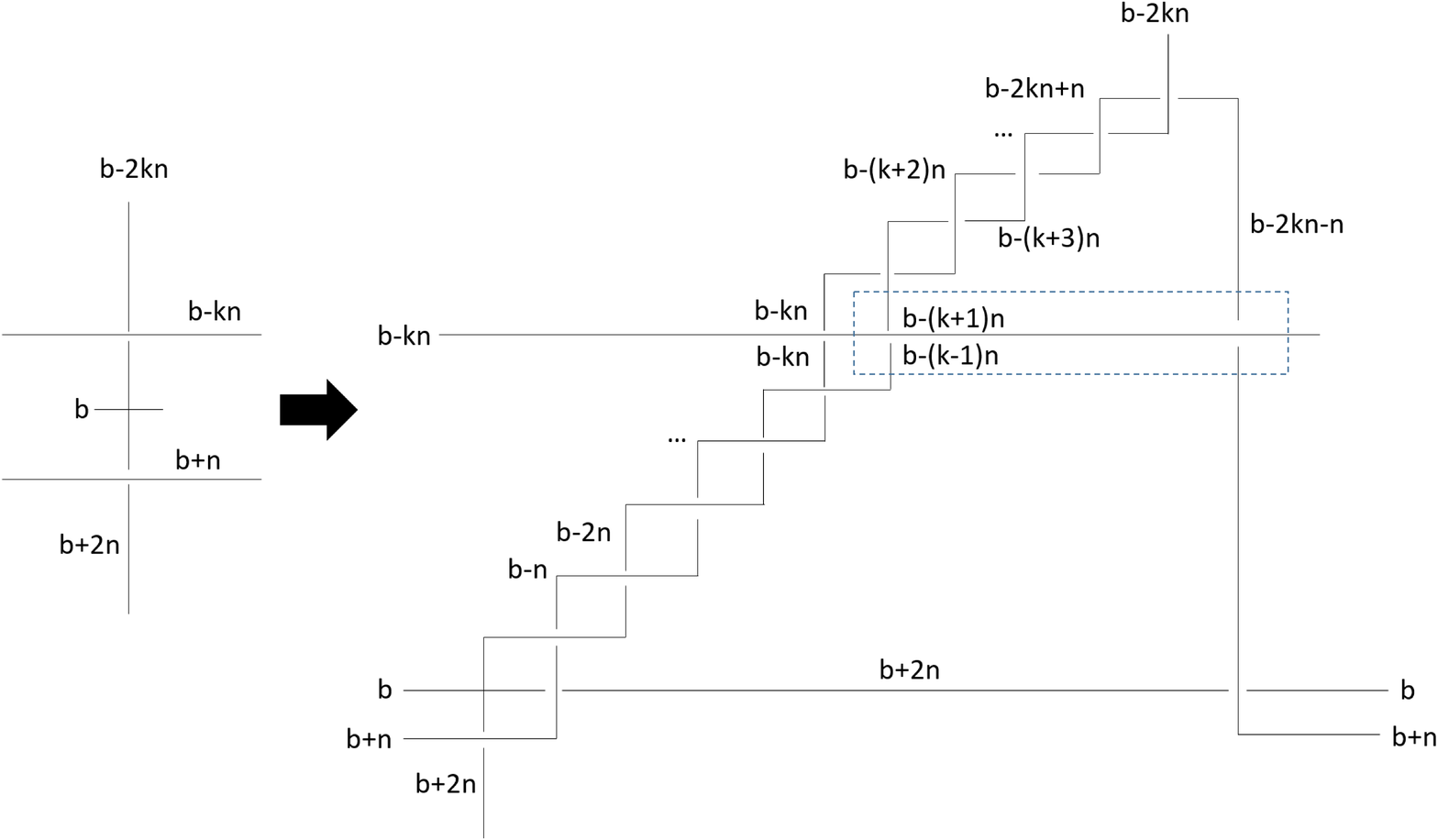}
\centerline{Figure 1.4.1}
\end{figure}

For the third and fourth subcases. When $q=2$, we can eliminate $b|b-2n|b-4n$ and $b|b+2n|b+4n$ as shown in Figure 1.4.2, and any newly created crossing in the process of elimination is either a $0$-diff crossing or an $n$-diff crossing. When $q=k$, $k\geq3$, we can eliminate $b|b-kn|b-2kn$ and $b|b+kn|b+2kn$ as shown in Figure 1.4.3. Let $b'=b-kn$, $b''=b+kn$. The newly created $b-n|b-kn|b-(2k-1)n$, that is $b'+(k-1)n|b'|b'-(k-1)n$ in the figure (above) and newly created $b+n|b+kn|b+(2k-1)n$, that is $b''-(k-1)n|b''|b''+(k-1)n$ in the figure (below), contained in the dashed boxes are the Case 2 with $q=k-1$. We can eliminate them as done in Case 2, and any newly created crossing in the process of elimination is either a $0$-diff crossing or an $n$-diff crossing.

\begin{figure}[htbp]
\centering
\includegraphics[height=5.2cm]{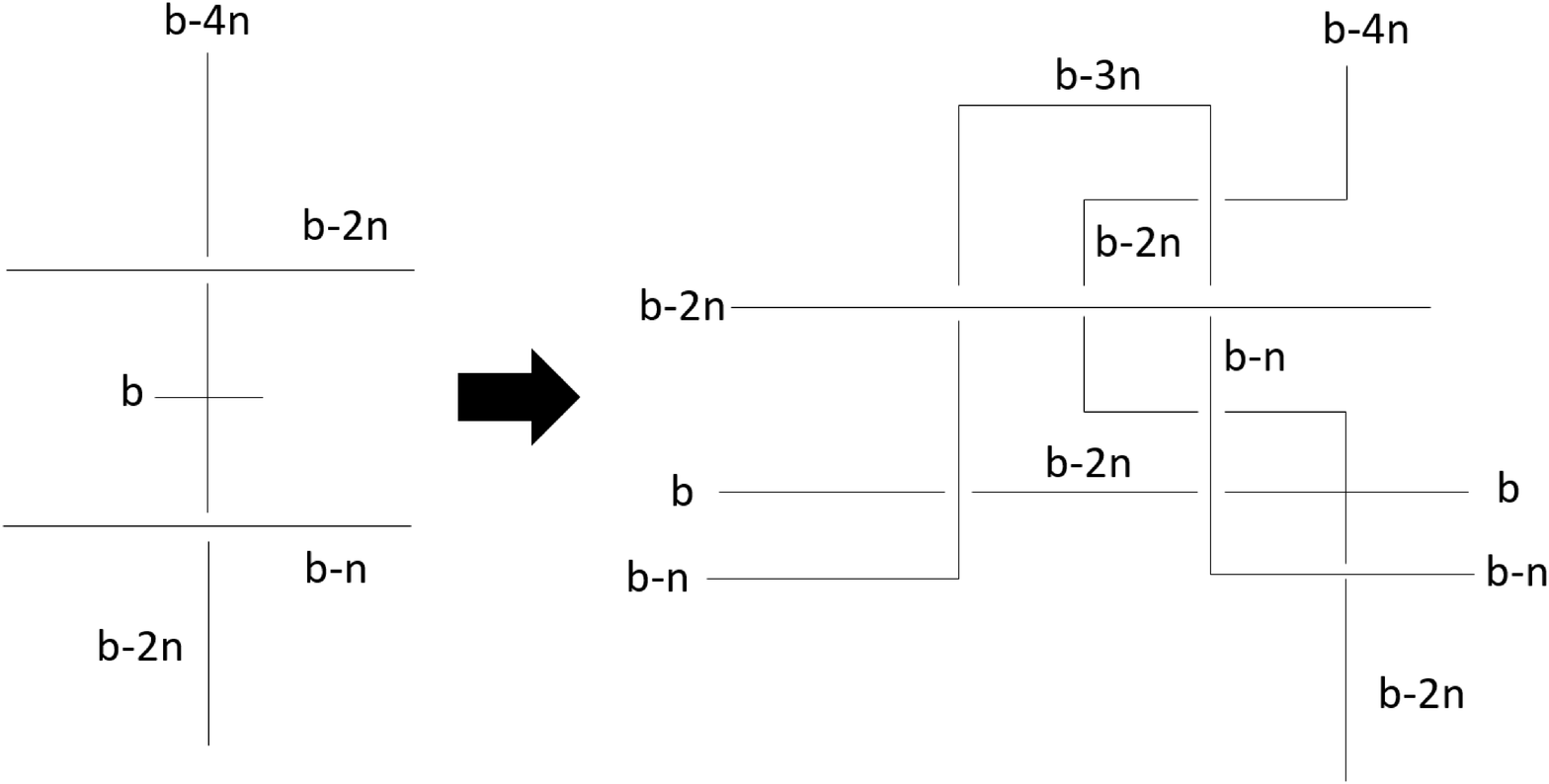}
\includegraphics[height=5.2cm]{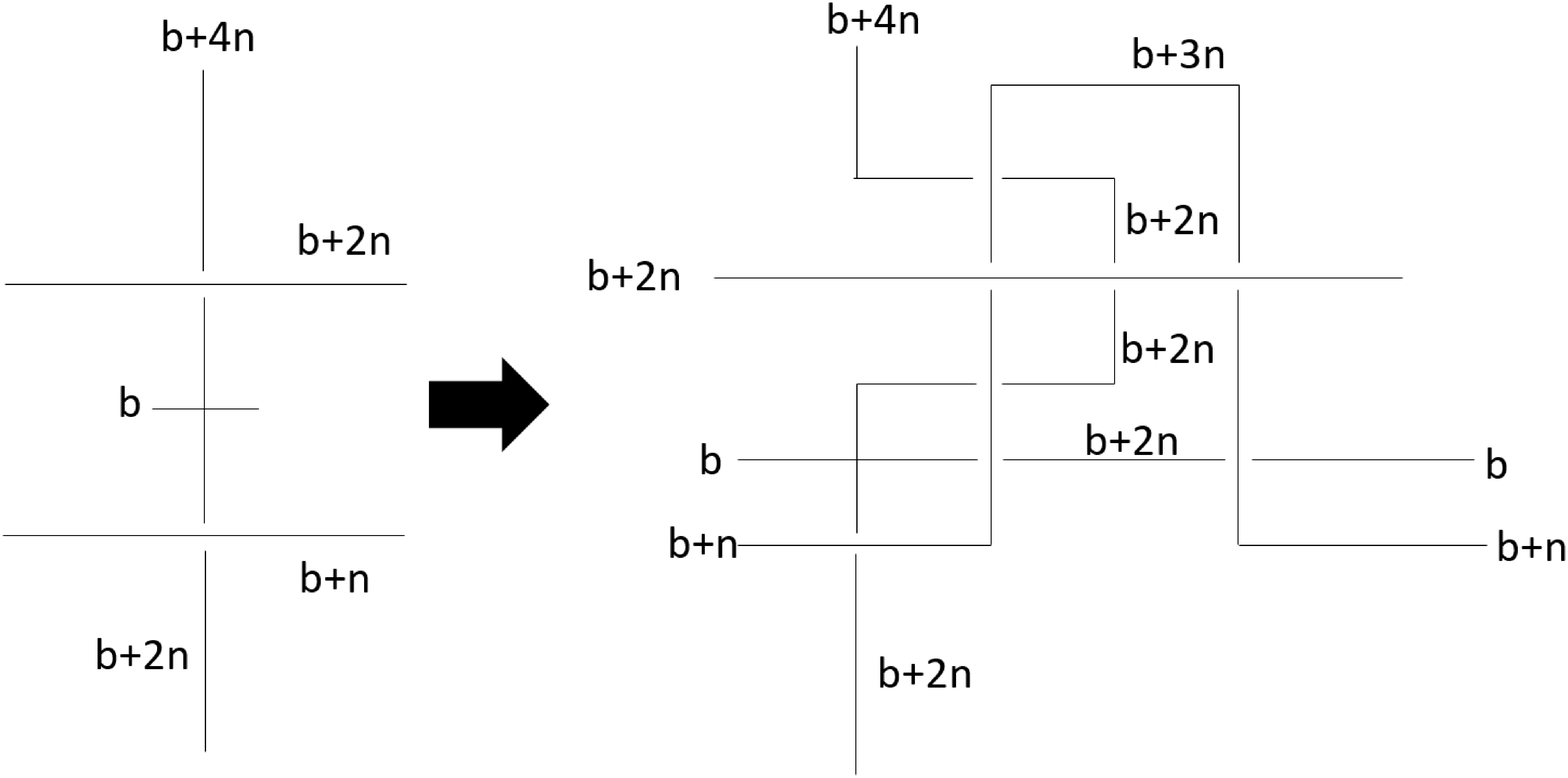}
\centerline{Figure 1.4.2}
\end{figure}

\begin{figure}[htbp]
\centering
\includegraphics[height=8cm]{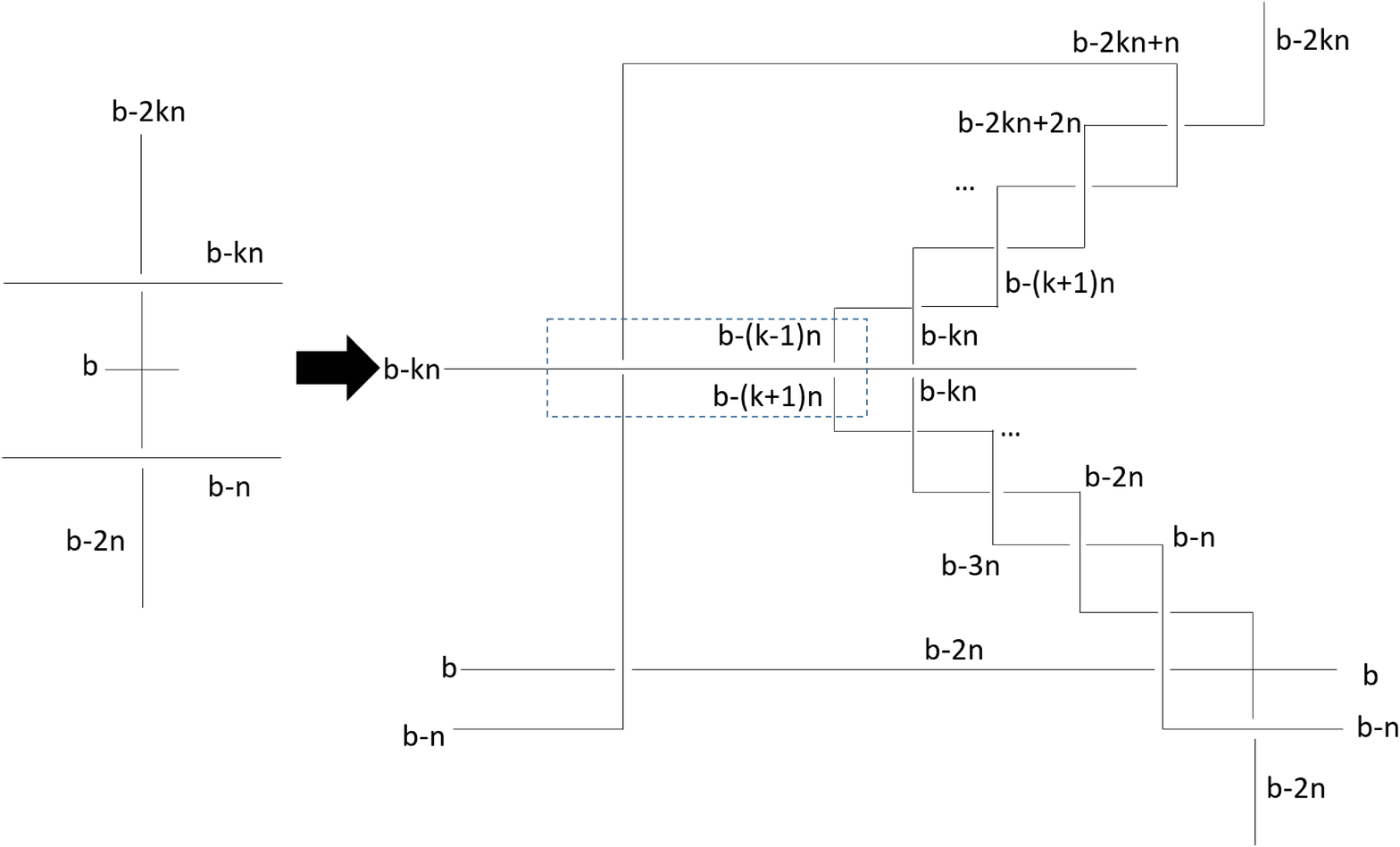}
\includegraphics[height=8cm]{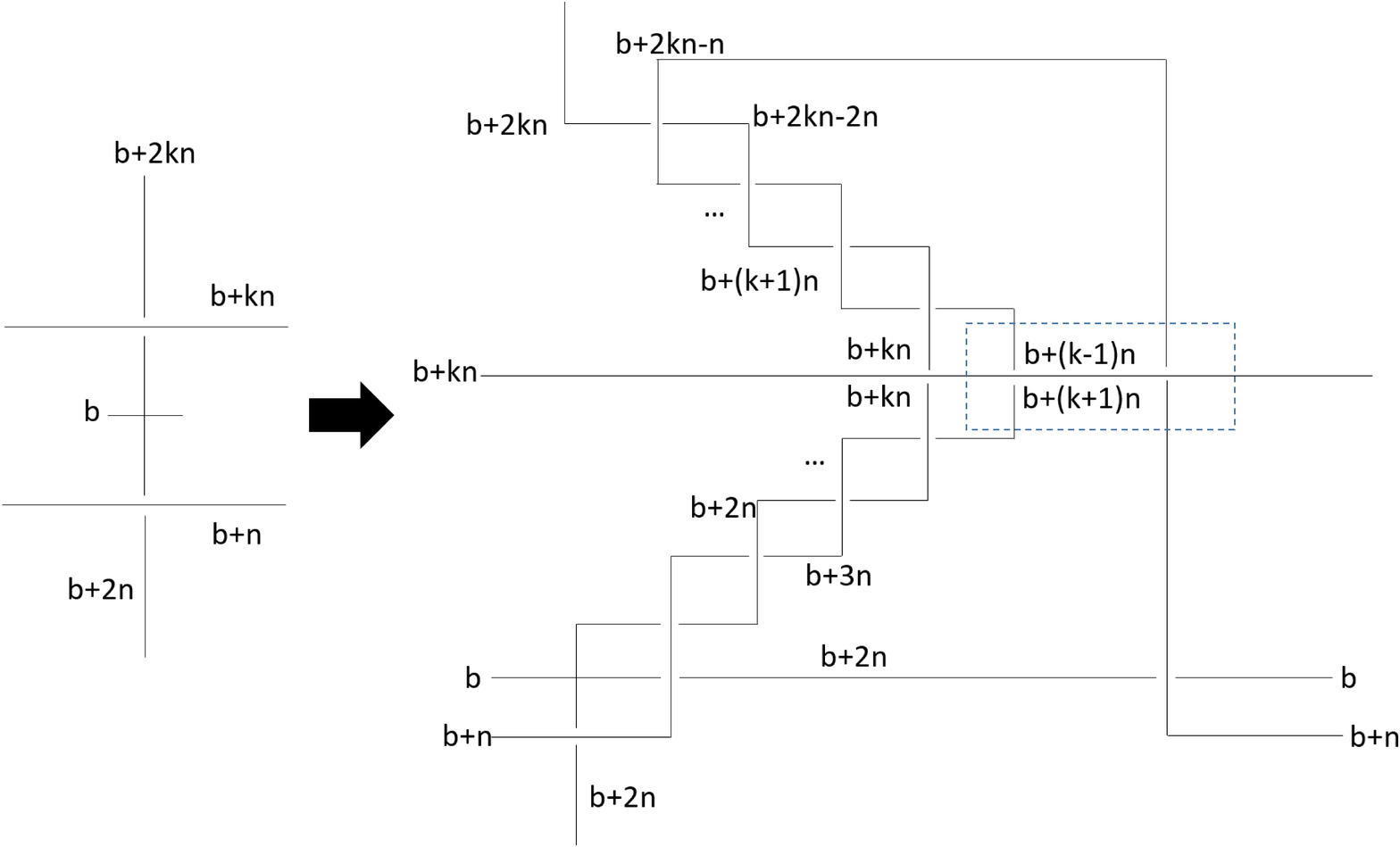}
\centerline{Figure 1.4.3}
\end{figure}

\end{proof}

\begin{Lemma}\label{22}
Let $L$ be a $ \mathbb{Z} $-colorable link, and $\gamma$ a $ \mathbb{Z} $-coloring on a diagram $D$ of $L$. If there exists a pair of adjacent $n$-diff crossing and $m$-diff crossing, we can convert this local structure to a new one by equivalent local moves, containing only $0$ or $d$-diff crossings, where $d=\gcd(m,n)$.
\end{Lemma}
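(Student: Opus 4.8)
The plan is to realize the Euclidean algorithm for $\gcd(m,n)$ by equivalent local moves, using Lemma \ref{11} as the engine for the exact-division steps. First I would reduce to a clean setting: since adjacency forces both crossings to be non-$0$-diff we have $m,n\geq 1$; the two differences play symmetric roles, so I may assume $m\geq n$; and if $m=n$ there is nothing to prove, because every crossing present is already $n$-diff with $n=\gcd(m,n)=d$. The argument then proceeds by strong induction on $m+n$, the inductive hypothesis being the full statement of Lemma \ref{22} for every adjacent pair whose two differences have strictly smaller sum.

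For the inductive step I assume $m>n$. The geometric heart is a single \emph{subtraction move}: a sequence of equivalent local moves converting the adjacent $n$-diff/$m$-diff configuration into an adjacent $n$-diff/$(m-n)$-diff configuration, in which every newly created crossing is a $0$-diff or an $n$-diff crossing. Granting this move, I note that $\gcd(n,m-n)=\gcd(n,m)=d$ and that $n+(m-n)=m<m+n$, so the induction hypothesis applies to the new $n$-diff/$(m-n)$-diff pair and rewrites it using only $0$-diff and $d$-diff crossings. It remains to absorb the $n$-diff crossings created by the subtraction move. Because $d\mid n$, each such crossing is either already a $d$-diff crossing (when $n=d$) or, together with a neighbouring $d$-diff crossing, forms an adjacent $n$-diff/$d$-diff pair in which $n$ is a proper multiple of $d$; in the latter case Lemma \ref{11} eliminates it, leaving only $0$-diff and $d$-diff crossings. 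Folding these reductions in yields a local structure containing only $0$-diff and $d$-diff crossings, as required.

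The main obstacle is the subtraction move itself, which is where the case analysis of Figure 1.1 re-enters. Exactly as in the proof of Lemma \ref{11}, each of the four cases and ten subcases describing how an $n$-diff and an $m$-diff crossing can sit adjacently must be treated separately: for each one I would exhibit the explicit Reidemeister moves that peel the difference $n$ off the $m$-diff crossing, and then verify the color bookkeeping so that the over and under colors of every intermediate crossing differ by exactly $0$ or $n$. I expect the even/odd bookkeeping visible in Lemma \ref{11} (where the induction on $q$ drops by $2$) to resurface here, so in practice the subtraction move may be packaged as the repeated local operation already constructed in that proof, now read as turning $m$ into $m-n$ rather than all the way down to $0$. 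Once the subtraction move is verified in every subcase, the induction above closes and the lemma follows.
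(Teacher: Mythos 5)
Your overall skeleton --- run the Euclidean algorithm on the pair of color differences, with Lemma \ref{11} as the engine for exact multiples --- is the same idea as the paper's proof, which inducts on the smaller difference $n$ and in one step replaces the adjacent pair $(n,m)$, $m=qn+r$, by an adjacent pair $(n,r)$ (division with remainder, rather than your repeated subtraction with induction on $m+n$). But your argument has a genuine gap at exactly the point you call the ``geometric heart'': the subtraction move is never constructed, and the fallback you offer --- that it is the ``repeated local operation already constructed'' in Lemma \ref{11}, ``now read as turning $m$ into $m-n$'' --- does not hold up. The moves of Lemma \ref{11} are built under the hypothesis that the larger difference is an exact multiple $qn$ of the smaller; its internal reduction in Case 1 drops $q$ by $2$, and Cases 2--4 are handled by rerouting into Cases 1 or 2 with shifted parameters, all with colors of the form $b\pm jn$. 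Nothing there yields, when $n\nmid m$, an adjacent $(n,m-n)$ pair in which every new crossing is $0$- or $n$-diff; indeed the naive Reidemeister II pushes one would try (sliding the under-strand of the $m$-diff crossing under an arc colored $b\pm n$ or under the over-arc $b$) create crossings of difference $m-n$, $m+n$, or $m$ --- not $n$.

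The paper's own handling of the non-divisible case is evidence that your stronger guarantee is not what the geometry provides: its Figures 2.1--2.4.2 create an $r$-diff crossing adjacent to a $z$-diff one, but the newly created crossings are only guaranteed to have differences $kd$, arbitrary nonnegative multiples of $d=\gcd(m,n)$, and the paper then needs a separate clean-up phase, applying Lemma \ref{11} to those crossings \emph{after} the induction has produced $d$-diff crossings for them to pair with. Your plan could probably be repaired the same way --- weaken the subtraction move's guarantee to ``all new crossings have difference a multiple of $d$ and stay adjacent to the modified structure,'' and keep your absorption step --- but then the case-by-case figures realizing the move still have to be drawn for all four cases and ten subcases of Figure 1.1; they are the actual content of the lemma, and neither your proposal nor Lemma \ref{11} supplies them. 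A smaller issue, which the paper also treats only briefly: your absorption step needs each leftover crossing to be \emph{adjacent} (connected through only $0$-diff crossings) to a $d$-diff crossing produced by the induction, and that adjacency is a geometric fact to be read off from the figures, not a consequence of divisibility alone.
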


\begin{proof}
Without loss of generality, we assume $m>n$. We shall prove Lemma \ref{22} by induction on $n$. If $n=1$, then $d=1$. Applying Lemma \ref{11}, we can convert the local structure and obtain a new local structure containing only $0$ or $1$-diff crossings, thus the lemma holds. Next, assuming that when $n\leq z-1$, $z\geq2$, the lemma holds, we shall prove when $n=z$, the lemma also holds.

If $m=qz$ ($q\geq1, q\in\mathbb{N}^{+}$), then $d=z$. Applying Lemma \ref{11}, we can obtain a new structure, containing only 0 or $d$-diff crossings, which means the lemma holds. If $m=qz+r$ ($q\geq1, 0<r<z$), we shall further prove, for each case, we can create a $r$-diff crossing which is adjacent to a $z$-diff crossing and the difference of any newly created crossing in the operation is $kd$ for some nonnegative integer $k$. Note that $n=z$,$m=qz+r$ and $d=\gcd(m,n)=\gcd(n,r)$.

For (1) in Figure 1.1 with $n=z$, $m=qz+r$, we can operate as shown in Figure 2.1.

For (2) in Figure 1.1 with $n=z$, $m=qz+r$, we can operate as shown in Figure 2.2.1, 2.2.2.

For (3) in Figure 1.1 with $n=z$, $m=qz+r$, we can operate as shown in Figure 2.3.

For (4) in Figure 1.1 with $n=z$, $m=qz+r$, we can operate as shown in Figure 2.4.1, 2.4.2.

In the figures above, there always exists a $r$-diff crossing which is adjacent to a $z$-diff crossing. See dashed boxes. After proving that, we can turn the adjacent $z$-diff crossing and $r$-diff crossing to a new local structure, containing only $0$ or $d$-diff crossings (and a $d$-differ crossing is always created) by hypothesis induction, because $0<r<z$. Moreover, because the difference of any newly created crossing in the operation of creating a $r$-diff crossing is $kd$ for some nonnegative integer $k$, the great common divisor of the $d$-diff crossing and its adjacent crossings is also $d$. Applying Lemma \ref{11}, we can turn these kinds of adjacent crossings to new structures containing only $0$ or $d$-diff crossings, and continuing this process repeatedly, the lemma will hold.
\end{proof}

\begin{figure}[htbp]
\centering
\includegraphics[height=8cm]{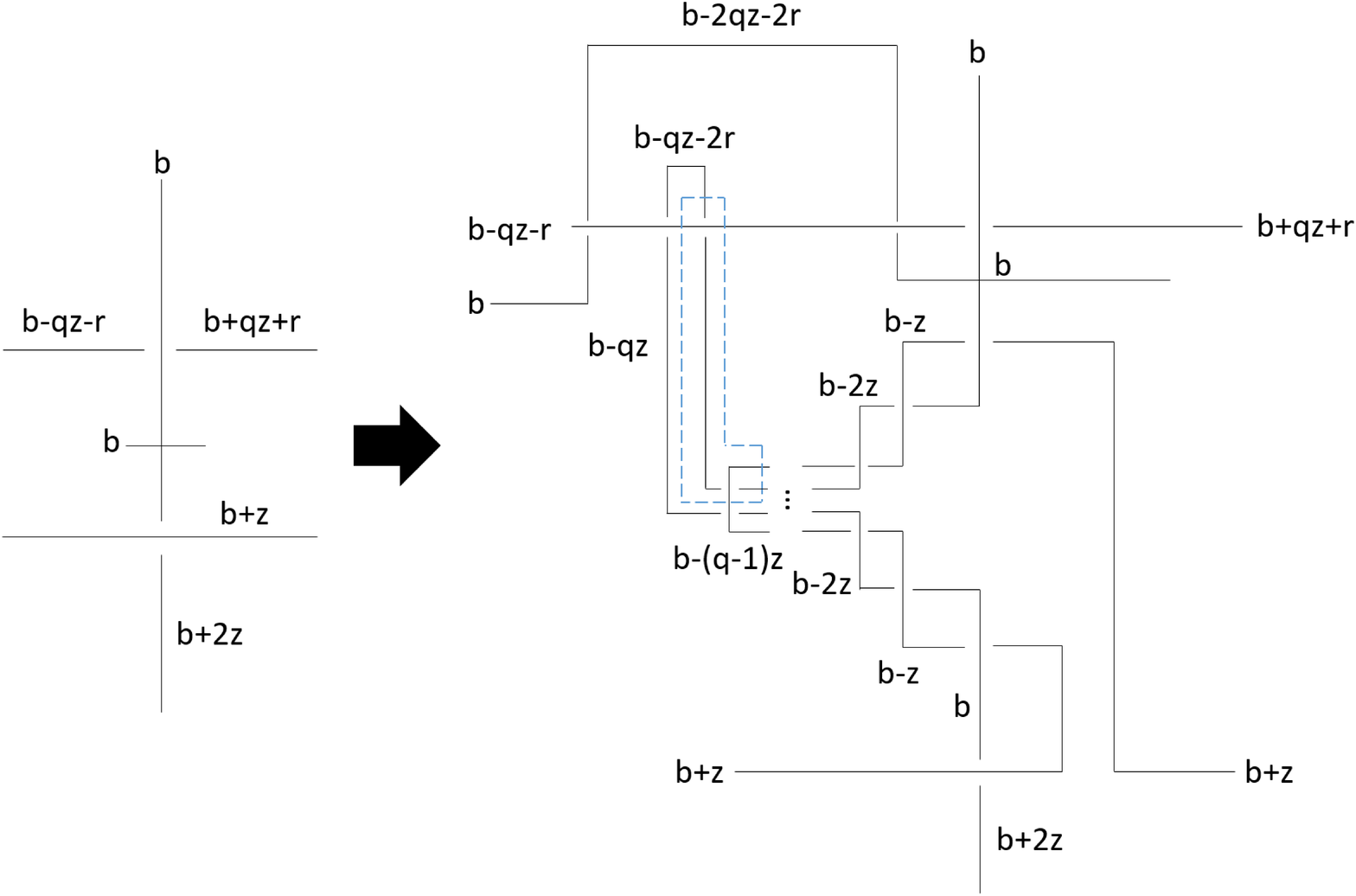}
\includegraphics[height=8cm]{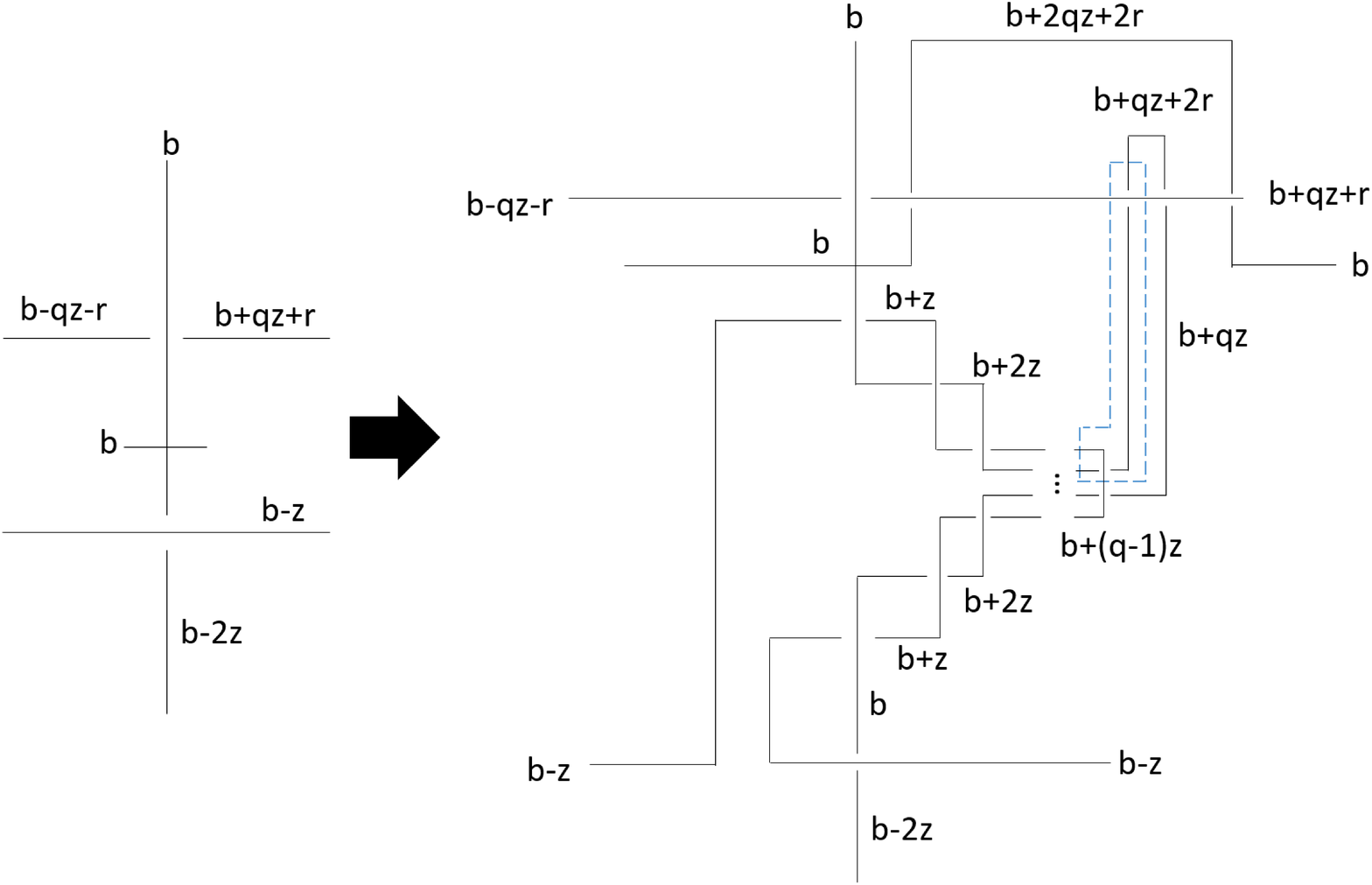}
\centerline{Figure 2.1}
\end{figure}

\begin{figure}[htbp]
\centering
\includegraphics[height=8cm]{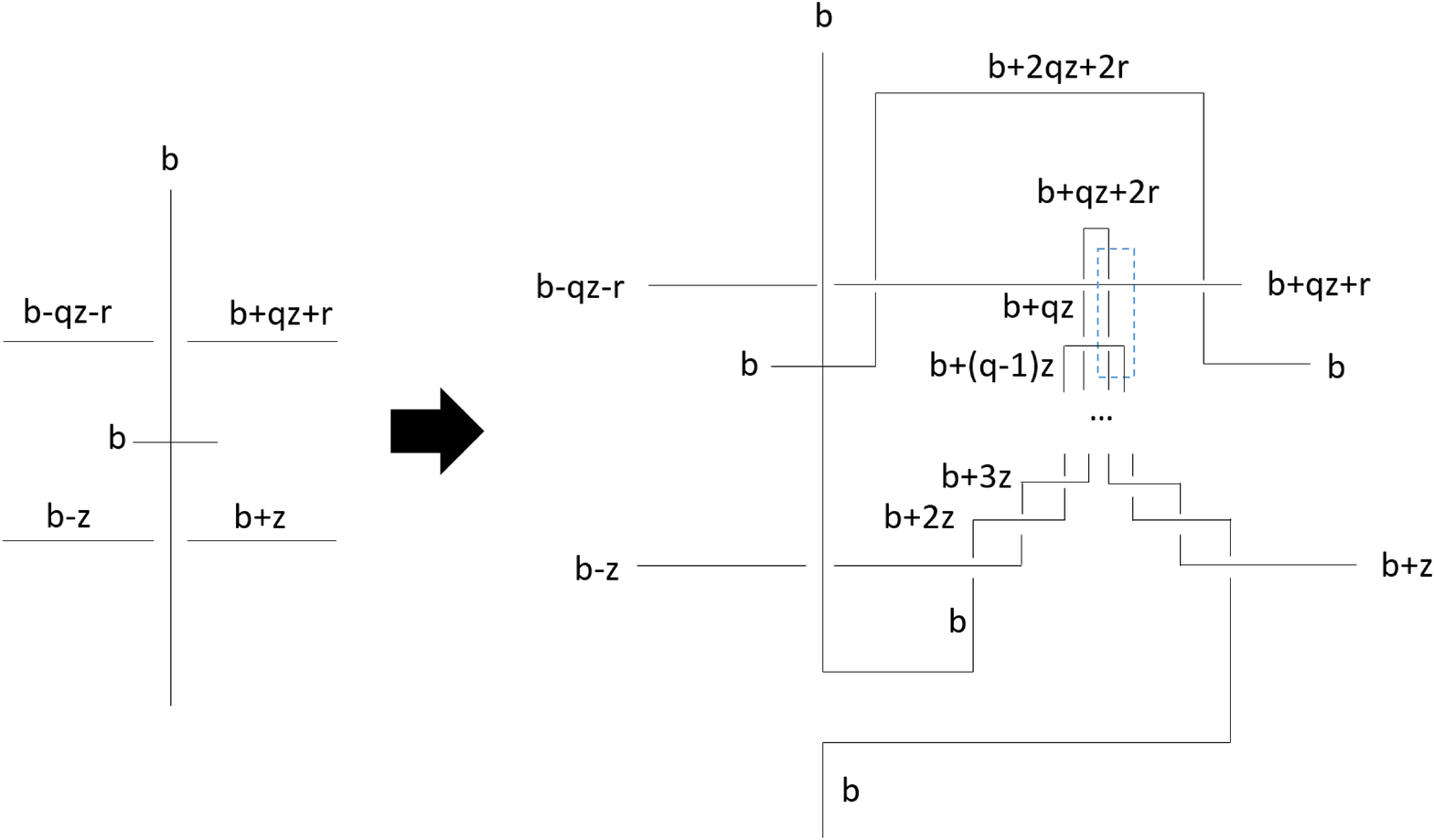}
\centerline{Figure 2.2.1}
\includegraphics[height=8cm]{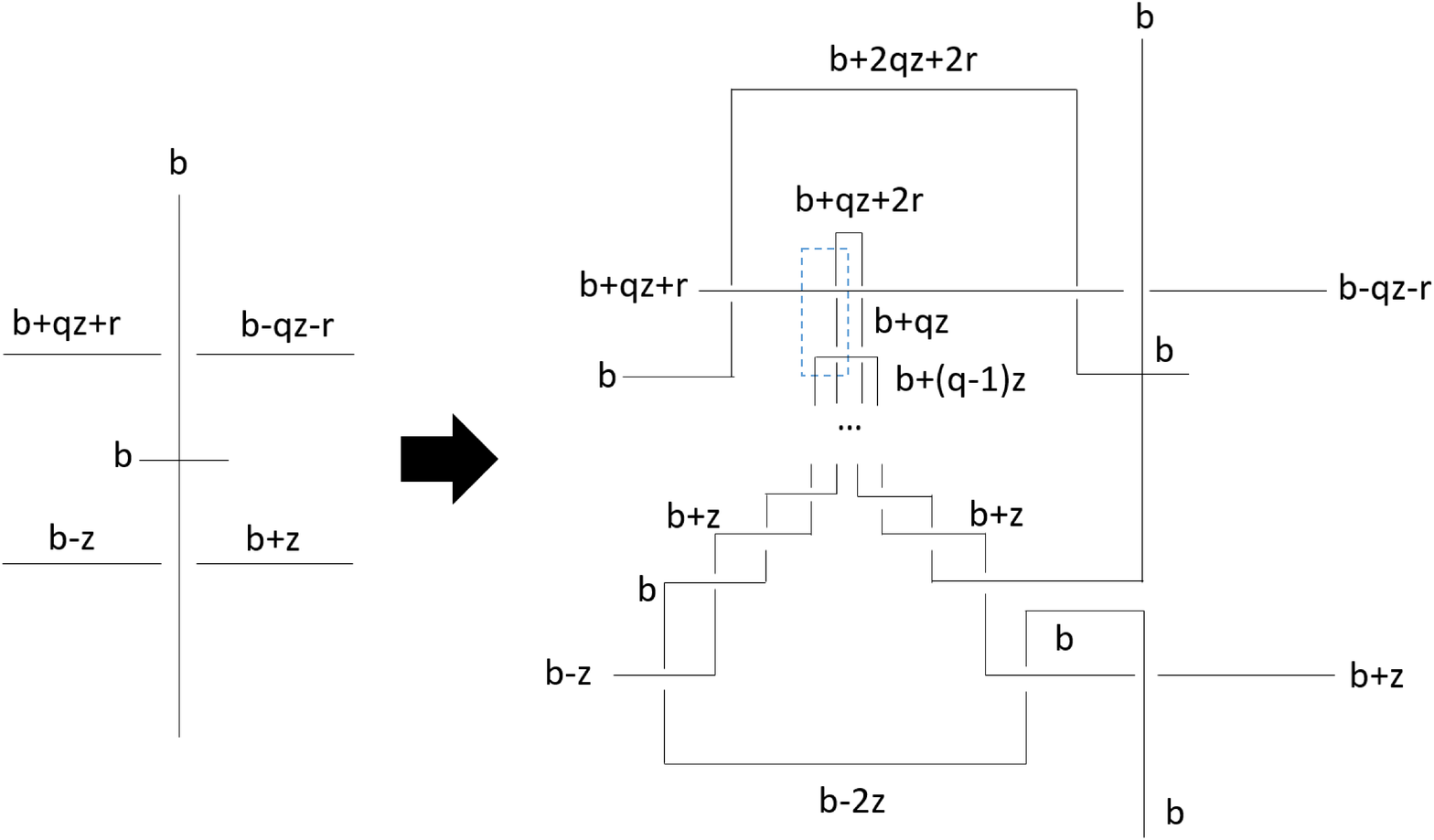}
\centerline{Figure 2.2.2}
\end{figure}

\begin{figure}[htbp]
\centering
\includegraphics[height=8cm]{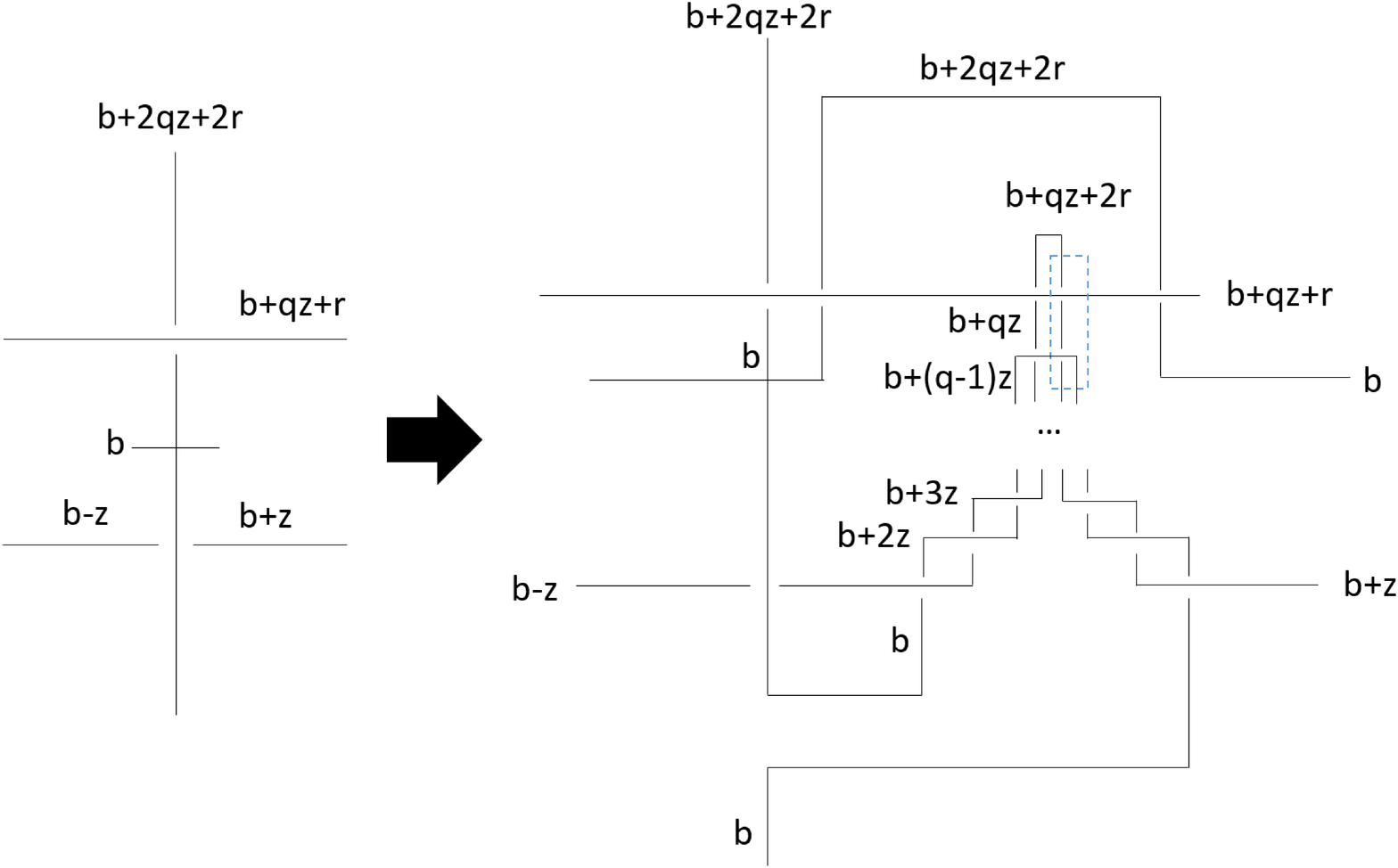}
\includegraphics[height=8cm]{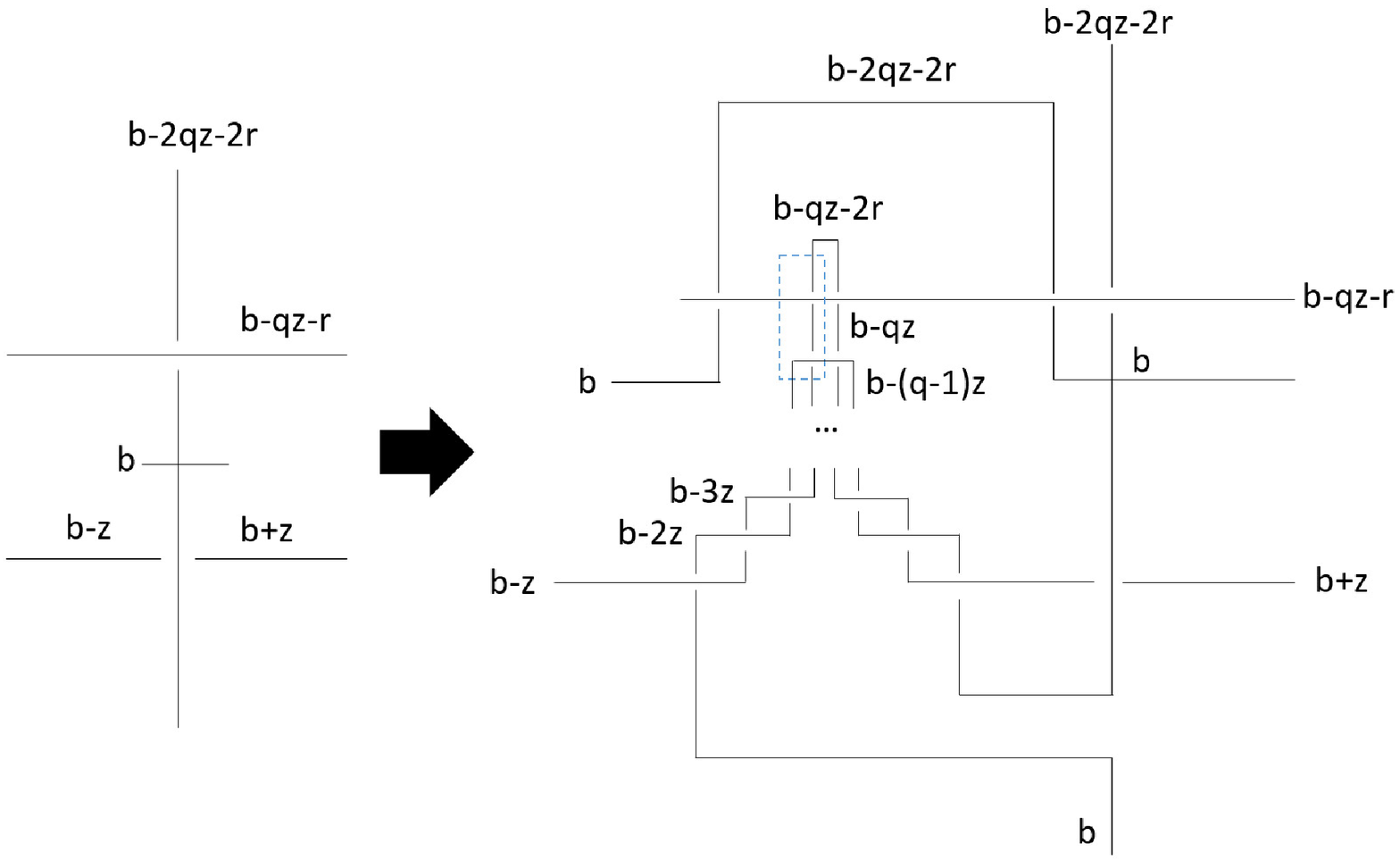}
\centerline{Figure 2.3}
\end{figure}

\begin{figure}[htbp]
\centering
\includegraphics[height=8cm]{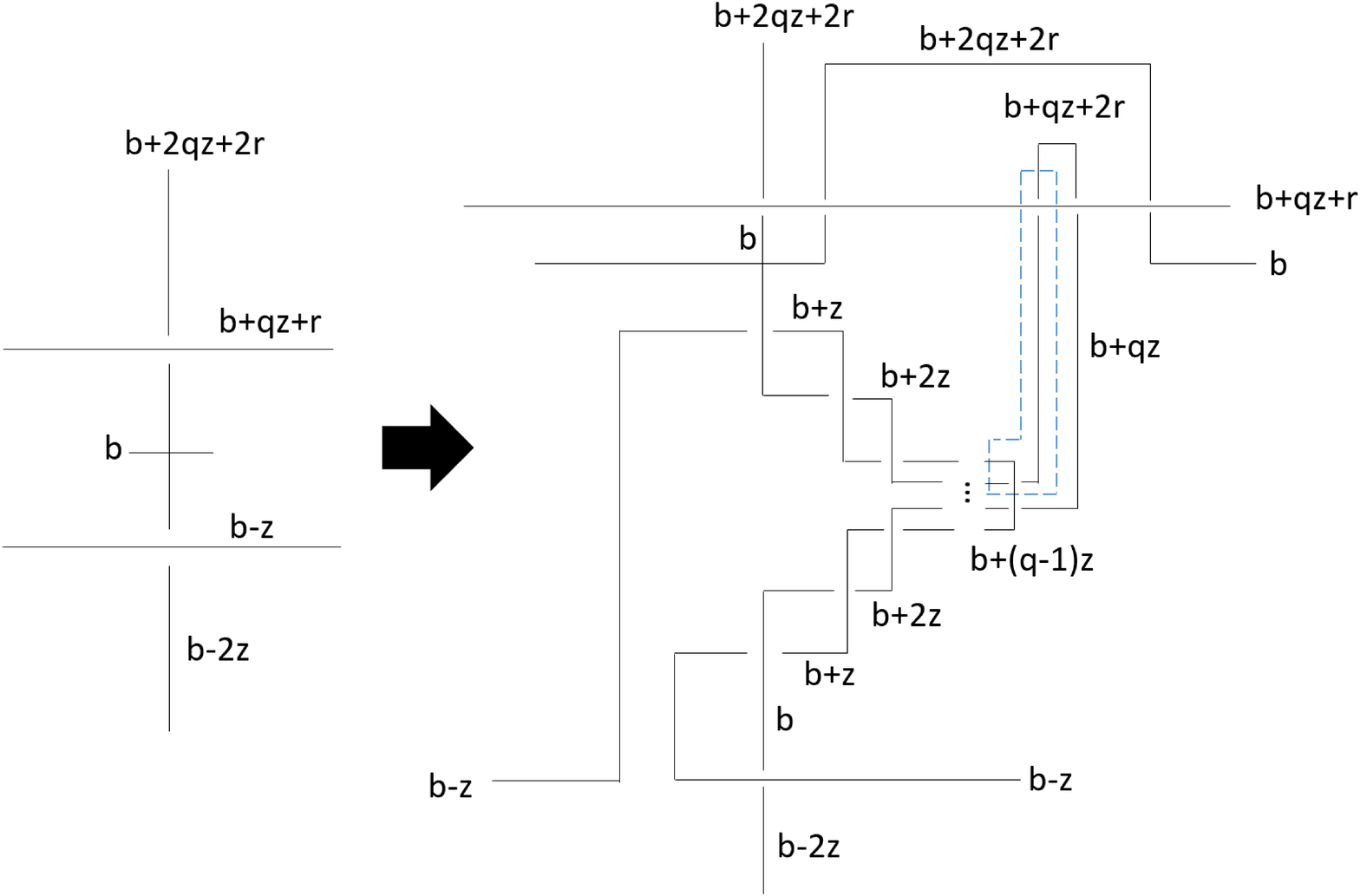}
\includegraphics[height=8cm]{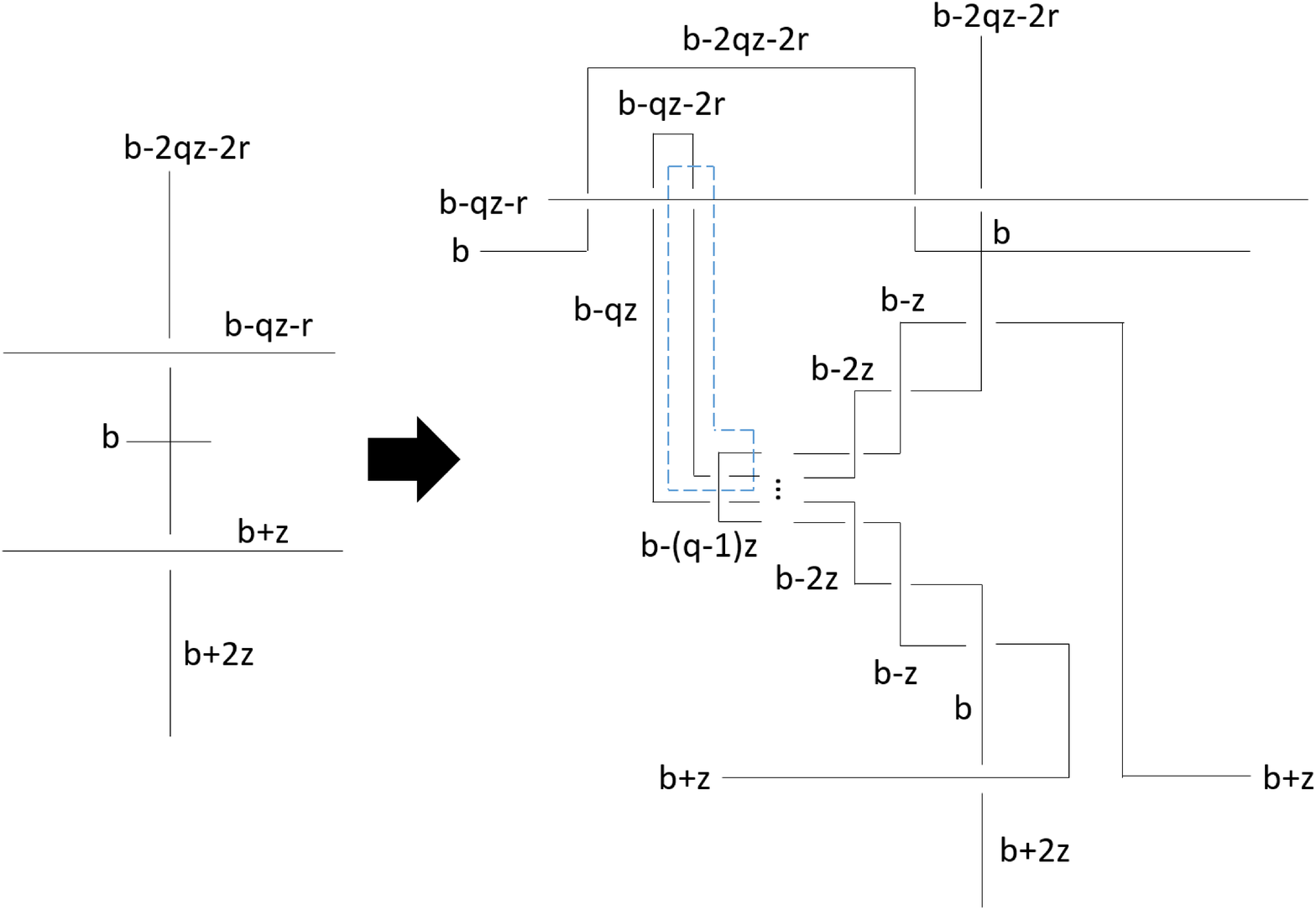}
\centerline{Figure 2.4.1}
\end{figure}

\begin{figure}[htbp]
\centering
\includegraphics[height=8cm]{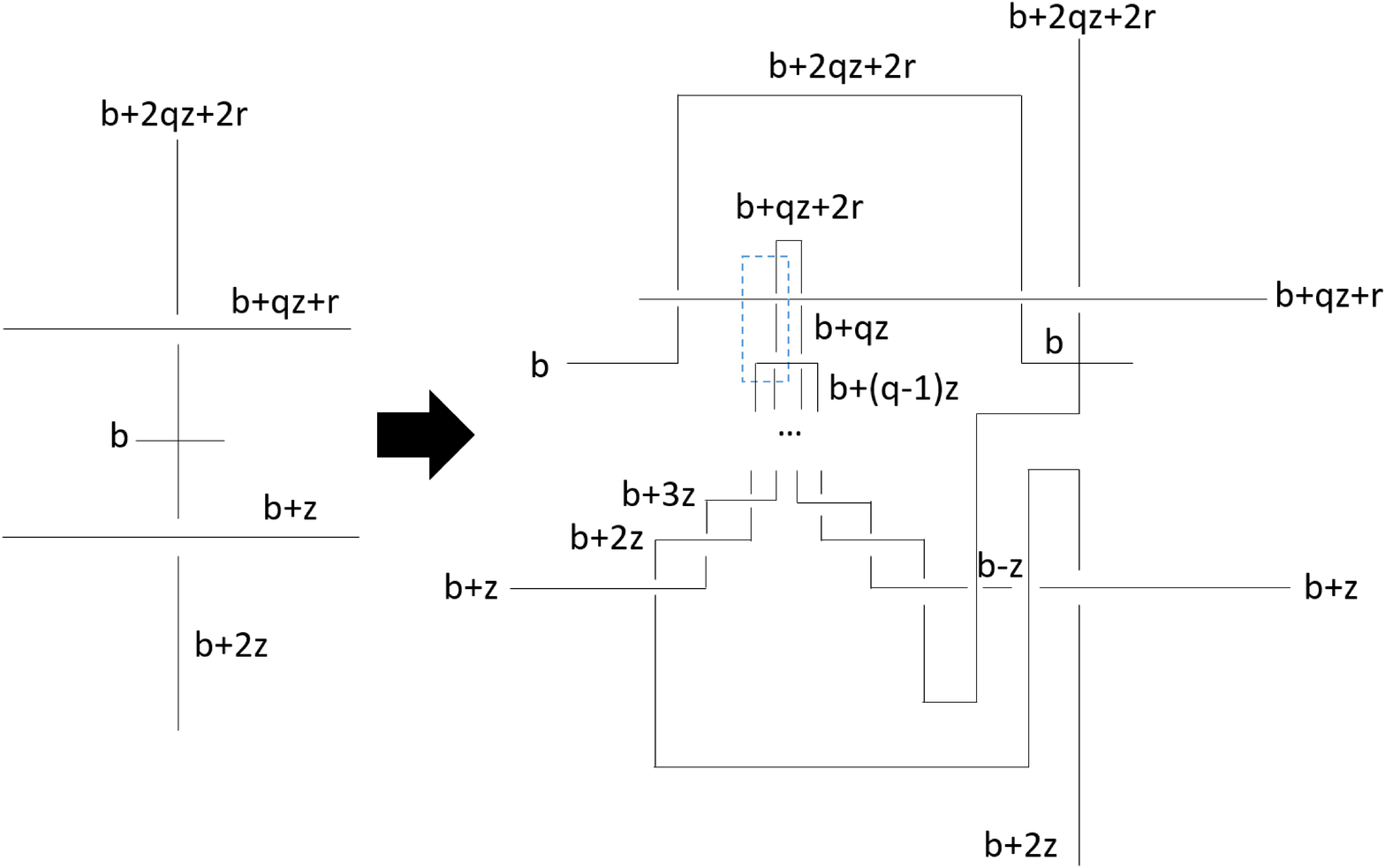}
\includegraphics[height=8cm]{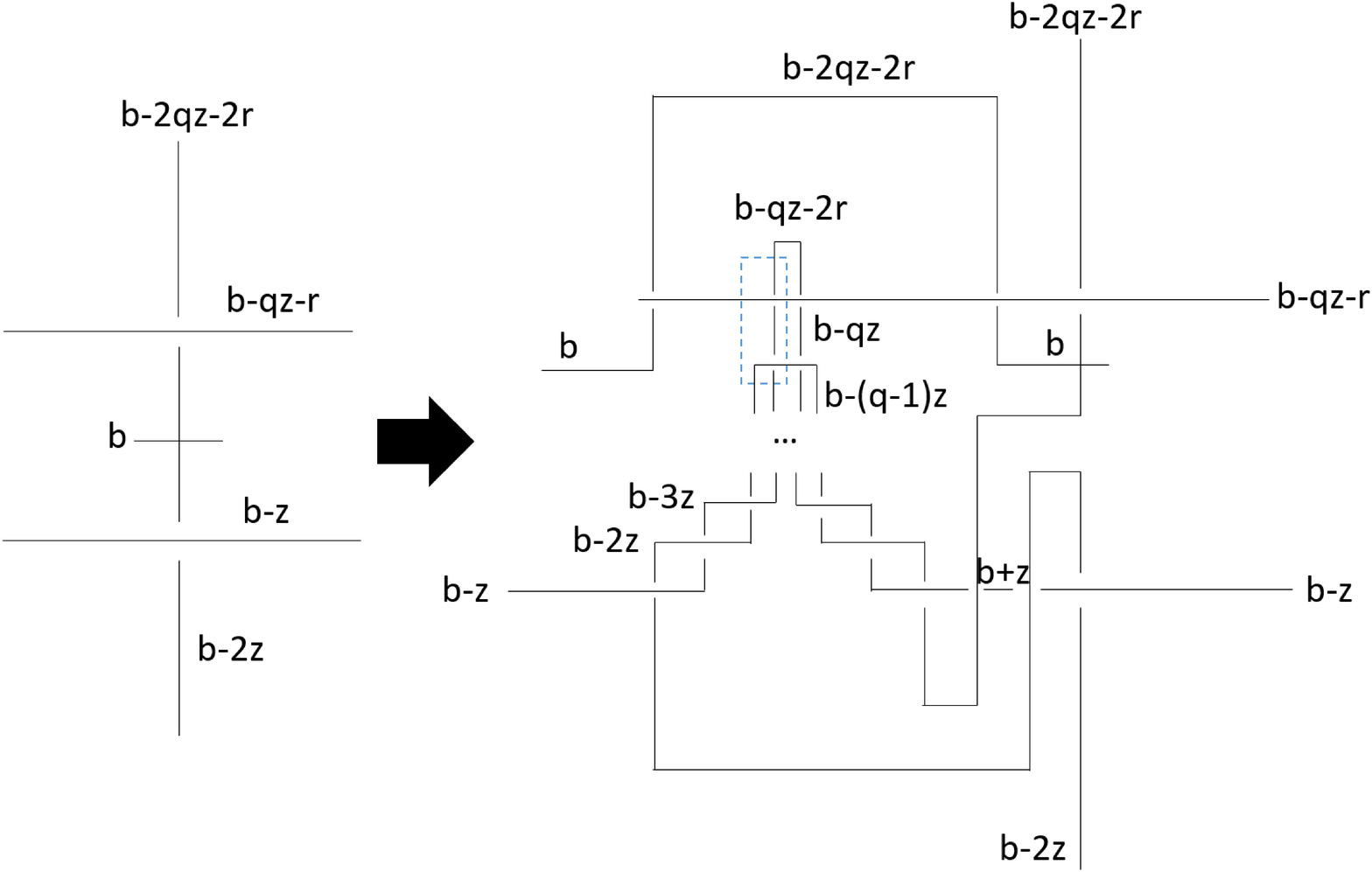}
\centerline{Figure 2.4.2}
\end{figure}

\noindent \textbf{Proof of Theorem 2.2.}
Let $L$ be a non-splittable $\mathbb{Z} $-colorable link, and $\gamma$ a $ \mathbb{Z} $-coloring on a diagram $D$ of $L$. If this coloring is not simple, then there exists a pair of adjacent $m$-diff and $n$-diff crossings, where $m,n\in \mathbb{N}^{+}$ and $m\neq n$. Applying the Lemma \ref{22} to this pair of crossings, we can obtain a new equivalent diagram by converting the local structure containing $m$-diff crossing, $n$-diff crossing and $0$-diff crossings between them to a new local structure containing only $0$ or $d$-diff crossings, where $d=\gcd(m,n)$. Since $L$ is non-splittable, continue the above process repeatedly, we can obtain a simple coloring.

\section*{Acknowledgements}
\noindent

This work is supported by NSFC (No. 11671336) and President's Funds of Xiamen University (No. 20720160011).

\end{document}